\newtheorem{theorem}{Theorem}[section]
\newtheorem{cor}[theorem]{Corollary}
\newtheorem{lemma}[theorem]{Lemma}
\newtheorem{proposition}[theorem]{Proposition}
\numberwithin{equation}{subsection}
\title{From a cotangent sum to a generalized\\ totient function}
\author{Michael Th. Rassias}
\date{\today}
\address{Institute of Mathematics, University of Zurich, CH-8057, Zurich, Switzerland
 \& Institute for Advanced Study, Program in Interdisciplinary Studies,
1 Einstein Dr, Princeton, NJ 08540, USA.}
\email{michail.rassias@math.uzh.ch, michailrassias@math.princeton.edu}
\begin{document}

 \maketitle
 
 \begin{abstract} In this paper we investigate a certain category of cotangent sums and more specifically the sum
 $$\sum_{m=1}^{b-1}\cot\left(\frac{\pi m}{b}\right)\sin^{3}\left(2\pi m\frac{a}{b}\right)\:$$
 and associate the distribution of its values to a generalized totient function $\phi(n,A,B)$, where
$$\phi(n,A,B):=\sum_{\substack{A\leq k \leq B \\ (n,k)=1}}1\:.$$ 
One of the methods used consists in the exploitation of relations between trigonometric sums and
the fractional part of a real number.  \\  \\ 
\textbf{Key words:} Cotangent sums, Euler totient function, generalized totient function, asymptotics, fractional part.\\ 
\textbf{2000 Mathematics Subject Classification:} 33B10,\,\,11L03,\,\,11N37.%
\newline

\end{abstract}

\section{Introduction}
\vspace{5mm}
\noindent For $a$, $b$, $n\in\mathbb{N}$, let
$$x_n:=\left\{\frac{na}{b}\right\}=\frac{na}{b}-\left\lfloor \frac{na}{b}\right\rfloor,$$
where $\left\lfloor u\right\rfloor$ stands for the floor function of the real number $u$. In other words $x_n$ denotes the fractional part of the rational number $na/b$ (for an extensive study of
fractional parts of real numbers see \cite{Furdui}).\\ We know (see \cite{Ras}, Proposition 2.1) that
\begin{proposition}\label{x:arxiko}
For every $a$, $b$, $n\in\mathbb{N}$, $b\geq 2$, we have
$$\sum_{m=1}^{b-1}\cot\left(\frac{\pi m}{b}\right)\cos\left(2\pi mn\frac{a}{b}\right)=0\:.$$
If $b\not|na$ then we also have
$$x_n=\frac{1}{2}-\frac{1}{2b}\sum_{m=1}^{b-1}\cot\left(\frac{\pi m}{b}\right)\sin\left(2\pi mn\frac{a}{b}\right)\:.$$
\end{proposition}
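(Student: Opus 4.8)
The first identity reflects the symmetry of the index set $\{1,\dots,b-1\}$ under $m\mapsto b-m$. Since $\cot\!\left(\frac{\pi(b-m)}{b}\right) = -\cot\!\left(\frac{\pi m}{b}\right)$ while $\cos\!\left(\frac{2\pi(b-m)na}{b}\right) = \cos\!\left(2\pi na - \frac{2\pi mna}{b}\right) = \cos\!\left(\frac{2\pi mna}{b}\right)$, each summand changes sign under this involution, so the sum equals its own negative and hence vanishes (for even $b$ the fixed index $m=b/2$ contributes $0$ because $\cot(\pi/2)=0$). Writing $\zeta = e^{2\pi i/b}$ and $S(r):=\sum_{m=1}^{b-1}\cot(\pi m/b)\,\zeta^{mr}$, the same substitution shows $S(r) = -\overline{S(r)}$, i.e.\ $S(r)$ is purely imaginary; this is an equivalent reformulation of the first assertion and will be convenient below.

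For the second identity the plan is to use discrete Fourier analysis on $\Z/b\Z$. Set $r := na\bmod b$; the hypothesis $b\nmid na$ gives $1\le r\le b-1$, so $x_n = r/b$, and since $\sin(2\pi mna/b)=\sin(2\pi mr/b)$ for all $m$ it suffices to prove
$$\frac{r}{b}\;=\;\frac12\;-\;\frac{1}{2b}\sum_{m=1}^{b-1}\cot\!\left(\frac{\pi m}{b}\right)\sin\!\left(\frac{2\pi m r}{b}\right).$$
Expand the sawtooth function $s\mapsto\{s/b\}$ on $\Z/b\Z$ in the characters $s\mapsto\zeta^{ms}$: its $m$-th Fourier coefficient $\frac{1}{b^2}\sum_{s=0}^{b-1}s\,\zeta^{-ms}$ equals $\frac{b-1}{2b}$ for $m=0$ and, by the closed form $\sum_{s=0}^{b-1}s\,w^s=\frac{b}{w-1}$ valid for every $b$-th root of unity $w\neq1$, equals $\frac{1}{b(\zeta^{-m}-1)}$ for $m\neq0$. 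The elementary computation
$$\frac{1}{\zeta^{-m}-1}\;=\;-\frac12+\frac{i}{2}\,\frac{\sin(2\pi m/b)}{1-\cos(2\pi m/b)}\;=\;-\frac12+\frac{i}{2}\cot\!\left(\frac{\pi m}{b}\right)$$
then yields, upon Fourier inversion,
$$\left\{\frac{r}{b}\right\}\;=\;\frac{b-1}{2b}\;-\;\frac{1}{2b}\sum_{m=1}^{b-1}\zeta^{mr}\;+\;\frac{i}{2b}\,S(r).$$
I would finish by noting that $\sum_{m=1}^{b-1}\zeta^{mr}=-1$ precisely because $b\nmid r$, which collapses the first two terms to $\tfrac12$, while $S(r)=i\sum_{m=1}^{b-1}\cot(\pi m/b)\sin(2\pi mr/b)$ is purely imaginary by the first part, so that $\tfrac{i}{2b}S(r)=-\tfrac{1}{2b}\sum_{m=1}^{b-1}\cot(\pi m/b)\sin(2\pi mr/b)$, which is exactly the displayed formula.

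A more elementary alternative for the displayed identity is induction on $r\in\{1,\dots,b-1\}$: the base case $r=1$ follows from $\cot\theta\sin2\theta=1+\cos2\theta$ together with $\sum_{m=1}^{b-1}\cos(2\pi m/b)=-1$, and the step $r\to r+1$ reduces, after applying the product-to-sum identities, to the evaluation $\sum_{m=1}^{b-1}\big[\cos(2\pi m(r+1)/b)+\cos(2\pi mr/b)\big]=-2$, which holds for $1\le r\le b-2$.

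The work is bookkeeping rather than conceptual. The points needing care are the closed evaluation of the weighted geometric sum $\sum_{s}s\,w^s$ over $b$-th roots of unity, the identity expressing $(\zeta^{-m}-1)^{-1}$ through $\cot(\pi m/b)$, and above all tracking the role of the hypothesis $b\nmid na$: it is exactly what makes $\sum_{m=1}^{b-1}\zeta^{mr}$ equal $-1$ rather than $b-1$, hence what produces the free term $\tfrac12$; when $b\mid na$ one has $r=0$ and the same computation degenerates correctly to $x_n=0$.
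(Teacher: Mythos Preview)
Your argument is correct. The paper itself does not prove this proposition; it simply imports it as Proposition~2.1 of \cite{Ras}, so there is no in-paper proof to compare against. Your route---symmetry $m\mapsto b-m$ for the cosine sum, then discrete Fourier expansion of the sawtooth $s\mapsto\{s/b\}$ on $\Z/b\Z$ with the closed evaluation $\sum_{s=0}^{b-1}s\,w^{s}=b/(w-1)$ for nontrivial $b$-th roots of unity---is clean and self-contained, and the induction alternative you sketch also goes through. The only indirect hint about the cited proof is the identity $\{a/b\}=\tfrac{a}{b}-\tfrac{1}{b}\sum_{\lambda=1}^{a}\sum_{m=0}^{b-1}e^{2\pi i m\lambda/b}$ that the present paper quotes from the same source in the proof of Lemma~\ref{x:lem2}; this suggests the original argument is likewise an exponential-sum computation, in the same spirit as (and essentially equivalent to) your Fourier approach.
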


Based on the  trigonometric identity 
$$\cos(n\theta)=\sum_{k=0}^{n}\cos^k(\theta)\: \sin^{n-k}(\theta)\:\cos\left(\frac{n-k}{2}\pi \right)  $$
in combination with the above proposition, we can inductively prove that for every $a$, $b$, $q$, $n\in\mathbb{N}$, $b\geq 2$, it holds
$$\sum_{m=1}^{b-1}\cot\left(\frac{\pi m}{b}\right)\cos^{q}\left(2\pi mn\frac{a}{b}\right)=0\:.$$
Additionally, by Proposition \ref{x:arxiko} we can prove that for every $a$, $b$, $n\in\mathbb{N}$, $b\geq 2$, we have
 $$\sum_{m=1}^{b-1}\cot\left(\frac{\pi m}{b}\right)\sin^{2}\left(2\pi mn\frac{a}{b}\right)=0\:.$$
 Hence, the natural question of calculating cotangent sums of the form
 $$\sum_{m=1}^{b-1}\cot\left(\frac{\pi m}{b}\right)\sin^{r}\left(2\pi mn\frac{a}{b}\right)\:,$$
 where $r\in\mathbb{N}$ and $r\geq 3$, arises.\\
 Interestingly, the investigation of the above category of cotangent sums, with $r\geq 3$, turns out to be more complex.\\ 
 In the subsequent sections, we shall calculate  the cotangent sum $S(1,a,b)$, where
 $$S(n,a,b)=\sum_{m=1}^{b-1}\cot\left(\frac{\pi m}{b}\right)\sin^{3}\left(2\pi mn\frac{a}{b}\right)\:$$
 and associate the distribution of its values to a generalized totient function $\phi(n,A,B)$,
 where
$$\phi(n,A,B):=\sum_{\substack{A\leq k \leq B \\ (n,k)=1}}1\:.$$ 
Moreover, we  prove several properties of $\phi(n,A,B)$ including an asymptotic formula. Namely, our main results are the following:
\begin{proposition}
Let $a$, $b\in\mathbb{N}$, where $(a,b)=1$, $a\geq b \geq 2$ and $b\neq 3$. Then
$$S(1,a,b)=0\ \text{or}\ \pm b/2\:.$$
\end{proposition}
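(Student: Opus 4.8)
The plan is to collapse the cubic cotangent sum to linear ones using the identity $\sin^{3}\theta=\frac14\left(3\sin\theta-\sin 3\theta\right)$. Taking $\theta=2\pi m a/b$ gives
$$S(1,a,b)=\frac34\sum_{m=1}^{b-1}\cot\left(\frac{\pi m}{b}\right)\sin\left(2\pi m\frac{a}{b}\right)-\frac14\sum_{m=1}^{b-1}\cot\left(\frac{\pi m}{b}\right)\sin\left(2\pi m\frac{3a}{b}\right).$$
Both sums on the right are of the type evaluated in Proposition~\ref{x:arxiko}. For the first one, the hypothesis ``$b\nmid na$'' reads $b\nmid a$, which holds since $(a,b)=1$ and $b\geq2$, so that sum equals $b\left(1-2\{a/b\}\right)$. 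For the second, the argument is $3a/b$, so the hypothesis to check is $b\nmid 3a$; because $(a,b)=1$ this fails precisely when $b\mid 3$, i.e.\ when $b=3$, which is exactly why that modulus is excluded from the statement. Hence for $b\neq 3$ the second sum equals $b\left(1-2\{3a/b\}\right)$.

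Substituting these two evaluations and simplifying, I expect to arrive at
$$S(1,a,b)=\frac{b}{2}\left(1-3\left\{\frac{a}{b}\right\}+\left\{\frac{3a}{b}\right\}\right).$$
Now put $r:=a\bmod b$, so that $\left\{a/b\right\}=r/b$ with $1\leq r\leq b-1$, and $\left\{3a/b\right\}=\left\{3r/b\right\}=\dfrac{3r}{b}-\left\lfloor\dfrac{3r}{b}\right\rfloor$. The fractional-part contributions cancel, leaving
$$S(1,a,b)=\frac{b}{2}\left(1-\left\lfloor\frac{3r}{b}\right\rfloor\right).$$
Since $1\leq r\leq b-1$ forces $0<3r/b<3$, we have $\left\lfloor 3r/b\right\rfloor\in\{0,1,2\}$, and therefore $S(1,a,b)\in\{b/2,\,0,\,-b/2\}$, which is the claim. (As a consistency check, the borderline values $3r=b$ and $3r=2b$ cannot occur when $b\neq 3$, since $(r,b)=(a,b)=1$.)

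There is no serious obstacle here; the only step that requires care is the verification of the hypotheses of Proposition~\ref{x:arxiko} for the two sums, which is precisely what isolates the exceptional modulus $b=3$. The assumption $a\geq b$ plays no role in the argument: every quantity above depends on $a$ only through its residue $a\bmod b$, so one could equally well assume $1\leq a\leq b-1$.
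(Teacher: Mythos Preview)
Your argument is correct and notably cleaner than the paper's own. Both proofs start from the same triple-angle reduction: the paper records it as Proposition~\ref{x:prop1}, namely $x_{3}=3x_{1}-1+\tfrac{2}{b}S(1,a,b)$, which is exactly your identity $S(1,a,b)=\tfrac{b}{2}\bigl(1-3\{a/b\}+\{3a/b\}\bigr)$ rearranged. From this point, however, the routes diverge. The paper introduces an auxiliary integer $k$ with $3a+k+1\equiv 0\pmod b$, the exponential sum $E(1,k)$, and the parameter $\nu=\lfloor(a+k)/b\rfloor$, derives the relation $(3\nu+2)b=(3a+k+1)+3E(1,k)+2S(1,a,b)$, and then runs a case analysis on $\nu\in\{0,1\}$ together with the divisibility $b\mid 2S(1,a,b)$ and the bound $|S(1,a,b)|<b$. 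You instead observe directly that $-3\{a/b\}+\{3a/b\}=-\lfloor 3r/b\rfloor$ with $r=a\bmod b$, reducing the question to the trivial fact that $\lfloor 3r/b\rfloor\in\{0,1,2\}$ when $1\le r\le b-1$.

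What your approach buys is not just brevity: the closed form $S(1,a,b)=\tfrac{b}{2}\bigl(1-\lfloor 3r/b\rfloor\bigr)$ simultaneously yields the content of Propositions~\ref{x:0}, \ref{x:b/2} and \ref{x:-b/2} (the value is $b/2$, $0$, or $-b/2$ according as $r<b/3$, $b/3<r<2b/3$, or $r>2b/3$), without any further case splitting. Your remark that the hypothesis $a\ge b$ is inert is also on point; the paper silently drops it by invoking periodicity in $a$.
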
 
\begin{proposition}
Let $a$, $b\in\mathbb{N}$, $b\geq 2$, where $(a,b)=1$ and $b\neq 3$. Then
$$S(1,a,b)=0$$ 
if and only if $2b=3a+k+1$, for some $k$, $0\leq k \leq b-2$.
\end{proposition}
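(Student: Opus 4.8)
The plan is to reduce $S(1,a,b)$ to a simple expression in the fractional parts $x_1=\{a/b\}$ and $x_3=\{3a/b\}$ via Proposition \ref{x:arxiko}, and then read off the vanishing condition. Since $S(1,a,b)$ is unchanged when $a$ is replaced by $a+b$, I may assume $1\le a\le b-1$. Linearizing the cube through the elementary identity $\sin^{3}\theta=\tfrac14(3\sin\theta-\sin 3\theta)$ gives
\[
S(1,a,b)=\frac{3}{4}\sum_{m=1}^{b-1}\cot\Bigl(\frac{\pi m}{b}\Bigr)\sin\Bigl(2\pi m\frac{a}{b}\Bigr)-\frac{1}{4}\sum_{m=1}^{b-1}\cot\Bigl(\frac{\pi m}{b}\Bigr)\sin\Bigl(2\pi m\frac{3a}{b}\Bigr).
\]
To apply the second formula of Proposition \ref{x:arxiko} to each sum I must check $b\nmid a$ and $b\nmid 3a$: the former follows from $(a,b)=1$ and $b\ge 2$, the latter because $b\mid 3a$ with $(a,b)=1$ would force $b\mid 3$, hence $b\in\{1,3\}$, both excluded. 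The proposition then gives $\sum_{m}\cot(\pi m/b)\sin(2\pi m n a/b)=b(1-2x_n)$ for $n=1,3$, and after simplification $S(1,a,b)=\tfrac{b}{2}(1-3x_{1}+x_{3})$.

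Next I would note that $3x_1-x_3$ is a non-negative integer: since $3a/b$ and $3x_1$ differ by the integer $3\lfloor a/b\rfloor$, we get $x_3=\{3x_1\}=3x_1-\lfloor 3x_1\rfloor$, so $3x_1-x_3=\lfloor 3x_1\rfloor\in\{0,1,2\}$ (using $0<x_1<1$). Hence $S(1,a,b)=\tfrac{b}{2}(1-\lfloor 3x_1\rfloor)$, which in particular shows $S(1,a,b)\in\{0,\,b/2,\,-b/2\}$, and $S(1,a,b)=0$ exactly when $\lfloor 3x_1\rfloor=1$. With $x_1=a/b$ and $1\le a\le b-1$ this is precisely the pair of inequalities $b\le 3a\le 2b-1$.

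Finally I would recast $b\le 3a\le 2b-1$ in the stated form by setting $k=2b-3a-1$: the inequality $3a\le 2b-1$ says $k\ge 0$, and the inequality $3a\ge b$ says $k\le b-1$. Sharpening $k\le b-1$ to $k\le b-2$ is the single place where $b\ne 3$ and $(a,b)=1$ enter: $k=b-1$ would mean $3a=b$, forcing $a\mid b$, hence $a=1$ and $b=3$, contrary to $b\ne 3$; so $3a\ne b$ and $k\le b-2$. Therefore $S(1,a,b)=0$ if and only if $2b=3a+k+1$ for some integer $k$ with $0\le k\le b-2$. I expect the only slightly delicate part to be this boundary bookkeeping --- keeping strict and non-strict inequalities straight and justifying the exclusion $3a\ne b$ --- since the passage through Proposition \ref{x:arxiko} is otherwise routine.
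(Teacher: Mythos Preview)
Your argument is correct, and it is considerably more direct than the paper's. Both approaches ultimately rest on the relation $x_{3}=3x_{1}-1+\tfrac{2}{b}S(1,a,b)$ (the paper records this as Proposition~\ref{x:prop1}; you rederive it via $\sin^{3}\theta=\tfrac14(3\sin\theta-\sin3\theta)$). From there, however, the paper introduces the auxiliary quantity $E(1,k)$, fixes the specific $k\in[0,b-2]$ with $3a+k+1\equiv 0\pmod b$, establishes the identity $(3\nu+2)b=(3a+k+1)+3E(1,k)+2S(1,a,b)$ of Proposition~\ref{x:vasiko}, and then runs a four-way case analysis on $(\nu,E(1,k))\in\{0,1\}\times\{0,b\}$ to isolate $2b=3a+k+1$. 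You instead observe that for $1\le a\le b-1$ one has $x_1=a/b$ and hence $3x_1-x_3=\lfloor 3a/b\rfloor$, giving the closed form $S(1,a,b)=\tfrac{b}{2}\bigl(1-\lfloor 3a/b\rfloor\bigr)$; the vanishing condition $\lfloor 3a/b\rfloor=1$ is then equivalent to $b\le 3a\le 2b-1$, which you translate into the required $k$. This bypasses $E(1,k)$ entirely and, as a bonus, simultaneously yields Proposition~\ref{x:times} and the companion characterizations for $S(1,a,b)=\pm b/2$ without further work. The paper's longer route does have the merit of building machinery (Lemma~\ref{x:lem2}, Proposition~\ref{x:vasiko}) that could in principle handle higher powers $\sin^{r}$, but for the cubic case your shortcut is cleaner.
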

\begin{cor}Let $a$, $b\in\mathbb{N}$, $b\geq 2$, where $(a,b)=1$ and $b\neq 3$. Then, the number of integers $a$, such that $1\leq a \leq b-1$ and $S(1,a,b)=0$, is given by the following formula
$$\#\left\{a\:|\:S(1,a,b)=0\right\}=\phi\left(b,\left\lceil \frac{b+1}{3}\right\rceil,\left\lfloor \frac{2b-1}{3}\right\rfloor\right).$$
\end{cor}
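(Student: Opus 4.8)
The plan is to deduce the Corollary directly from the preceding Proposition, which characterizes the vanishing of $S(1,a,b)$; the only work left is elementary bookkeeping with the ranges. Fix $b\in\mathbb N$ with $b\ge 2$ and $b\ne 3$, and read the statement as: count the integers $a$ with $1\le a\le b-1$, $(a,b)=1$, and $S(1,a,b)=0$ (this matches the definition of $\phi$, which already imposes coprimality to $b$ on its summation variable). For such an $a$ the preceding Proposition gives $S(1,a,b)=0$ if and only if $2b=3a+k+1$ for some integer $k$ with $0\le k\le b-2$.

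The first step I would record is that, with $a$ and $b$ fixed, the integer $k$ in that identity is uniquely determined, namely $k=2b-3a-1$; hence the existential condition collapses to the single double inequality $0\le 2b-3a-1\le b-2$, and there is no risk of counting an $a$ more than once. Solving this for $a$: the inequality $2b-3a-1\ge 0$ is equivalent to $3a\le 2b-1$, while $2b-3a-1\le b-2$ is equivalent to $3a\ge b+1$. Therefore $S(1,a,b)=0$ holds exactly when $(b+1)/3\le a\le (2b-1)/3$, and since $a$ is an integer this is the same as $\lceil (b+1)/3\rceil\le a\le \lfloor (2b-1)/3\rfloor$.

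Next I would verify that this interval is automatically contained in $\{1,\dots,b-1\}$, so that the side constraint $1\le a\le b-1$ in the statement is vacuous. Indeed, for $b\ge 2$ one has $(b+1)/3\ge 1$, hence $\lceil (b+1)/3\rceil\ge 1$; and $(2b-1)/3\le b-1$ (equivalently $b\ge 2$), hence $\lfloor (2b-1)/3\rfloor\le b-1$. Consequently the set of admissible $a$ is precisely $\{a: \lceil (b+1)/3\rceil\le a\le \lfloor (2b-1)/3\rfloor,\ (a,b)=1\}$, whose cardinality is by definition exactly $\phi\big(b,\lceil (b+1)/3\rceil,\lfloor (2b-1)/3\rfloor\big)$, which is the claimed formula.

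There is no genuine obstacle here once the preceding Proposition is in hand: the Corollary is essentially a reformulation of it. The two points deserving care are the uniqueness of $k$ (needed so that the arithmetic condition is a clean double inequality rather than a disjunction, avoiding multiplicities) and the elementary check that the ceiling/floor interval lies inside $[1,b-1]$, which is exactly where the hypothesis $b\ge 2$ enters; the exclusion $b\ne 3$ is inherited from the Proposition being used.
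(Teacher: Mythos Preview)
Your proof is correct and follows the same route as the paper: the paper simply remarks that the preceding Proposition forces $(b+1)/3\le a\le (2b-1)/3$ and then states the Corollary, while you spell out the same deduction in full (uniqueness of $k$, solving the double inequality, and checking the interval sits inside $[1,b-1]$). No differences in approach---your argument is just a more detailed version of the paper's one-line derivation.
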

\begin{proposition}
Let $a$, $b\in\mathbb{N}$, $b\geq 2$, where $(a,b)=1$ and $b\neq 3$. Then
$$S(1,a,b)=\frac{b}{2}$$ 
if and only if $b=3a+k+1$, for some $k$, $0\leq k \leq b-2$.
\end{proposition}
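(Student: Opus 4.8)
The plan is to linearize $\sin^3$ and then apply Proposition~\ref{x:arxiko} twice, once with $n=1$ and once with $n=3$, so as to convert the cotangent sum into an elementary expression in the fractional parts $\{a/b\}$ and $\{3a/b\}$; the equivalence in the statement will then drop out of a three-way case analysis.

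First I would reduce to the range $1\le a\le b-1$ (the regime of the Corollary): $m\mapsto\sin^3(2\pi m a/b)$ is unchanged when $a$ is replaced by $a\bmod b$, so $S(1,a,b)$ depends only on $a\bmod b$, and moreover any representation $b=3a+k+1$ with $0\le k\le b-2$ already forces $a\le(b-1)/3<b$, so this normalization is harmless for both directions. Then, using $\sin^{3}\theta=\tfrac14\bigl(3\sin\theta-\sin 3\theta\bigr)$, I would write
$$S(1,a,b)=\frac{3}{4}\sum_{m=1}^{b-1}\cot\left(\frac{\pi m}{b}\right)\sin\left(2\pi m\frac{a}{b}\right)-\frac{1}{4}\sum_{m=1}^{b-1}\cot\left(\frac{\pi m}{b}\right)\sin\left(2\pi m\frac{3a}{b}\right).$$

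Next I would invoke the second formula of Proposition~\ref{x:arxiko}. For $n=1$ it applies since $(a,b)=1$ and $b\ge 2$ give $b\nmid a$; for $n=3$ it applies since $(a,b)=1$ together with $b\neq 3$ (and $b\ge2$) give $b\nmid 3a$ — this is precisely where the hypothesis $b\neq 3$ enters. This yields $\sum_{m}\cot(\pi m/b)\sin(2\pi m a/b)=b(1-2x_1)$ and $\sum_{m}\cot(\pi m/b)\sin(2\pi m\cdot 3a/b)=b(1-2x_3)$ with $x_1=\{a/b\}$, $x_3=\{3a/b\}$, hence
$$S(1,a,b)=\frac{3b}{4}(1-2x_1)-\frac{b}{4}(1-2x_3)=\frac{b}{2}-\frac{3b}{2}x_1+\frac{b}{2}x_3.$$
Since $1\le a\le b-1$ we have $x_1=a/b$ and $x_3=(3a\bmod b)/b$ with $\lfloor 3a/b\rfloor\in\{0,1,2\}$, and the boundary values $3a=b$ and $3a=2b$ cannot occur (they would force $3\mid b$ and $(a,b)>1$ once $b\neq3$). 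A case split on $\lfloor 3a/b\rfloor$ then gives $S(1,a,b)=b/2$ when $3a<b$, $S(1,a,b)=0$ when $b<3a<2b$, and $S(1,a,b)=-b/2$ when $2b<3a$. In particular $S(1,a,b)=b/2$ iff $3a\le b-1$, i.e. iff $b=3a+k+1$ with $k:=b-3a-1\ge0$; since $a\ge1$ this $k$ automatically satisfies $k\le b-4\le b-2$, while conversely any $b=3a+k+1$ with $0\le k\le b-2$ gives $3a\le b-1$. This is the asserted equivalence.

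The argument is essentially routine once the linearization and the two applications of Proposition~\ref{x:arxiko} are in place, so I do not expect a genuine obstacle. The only places needing care are checking the applicability of Proposition~\ref{x:arxiko} for $n=3$ (the role of $b\neq 3$) and ruling out the boundary cases $3a=b$, $3a=2b$; a minor final chore is matching the range $0\le k\le b-2$ to the statement verbatim.
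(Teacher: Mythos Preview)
Your argument is correct, and it is considerably more direct than the paper's. The paper first develops Proposition~\ref{x:vasiko}, which packages the relation between $S(1,a,b)$ and fractional parts through the auxiliary exponential sum $E(1,k)$ and the integer $\nu=\lfloor (a+k)/b\rfloor$, where $k$ is the unique residue with $3a+k+1\equiv 0\pmod b$; the characterization of the value $b/2$ is then obtained by a four-way case split on $(\nu,E(1,k))\in\{0,1\}\times\{0,b\}$. You bypass that machinery entirely: linearizing $\sin^3$ and applying Proposition~\ref{x:arxiko} for $n=1$ and $n=3$ already gives $S(1,a,b)=\tfrac{b}{2}(1+x_3-3x_1)$ (equivalently, this is just Proposition~\ref{x:prop1} solved for $S$), after which the three-way split on $\lfloor 3a/b\rfloor$ immediately yields all three values $b/2,\,0,\,-b/2$ and their exact ranges of $a$. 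Your route is shorter and in fact simultaneously recovers Proposition~\ref{x:times} and the companion Propositions~\ref{x:0} and~\ref{x:-b/2}; what the paper's formulation through $E(1,k)$ and $\nu$ buys is a framework more amenable to generalization (higher powers of $\sin$, other $n$), at the cost of extra bookkeeping in the case $r=3$.
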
 
\begin{cor}Let $a$, $b\in\mathbb{N}$, $b\geq 2$, where $(a,b)=1$ and $b\neq 3$. Then, the number of integers $a$, such that $1\leq a \leq b-1$ and $S(1,a,b)=b/2$, is given by the following formula
$$\#\left\{a\:|\:S(1,a,b)=\frac{b}{2}\right\}=\phi\left(b,1,\left\lfloor \frac{b-1}{3}\right\rfloor\right).$$
\end{cor}
\begin{proposition}
Let $a$, $b\in\mathbb{N}$, $b\geq 2$, where $(a,b)=1$ and $b\neq 3$. Then
$$S(1,a,b)=-\frac{b}{2}$$ 
if and only if $3b=3a+k+1$, for some $k$, $0\leq k \leq b-2$.
\end{proposition}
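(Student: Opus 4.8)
The plan is to reduce everything to Proposition \ref{x:arxiko} by linearizing the cubed sine. First I would use the elementary identity $\sin^{3}\theta = \tfrac{3}{4}\sin\theta - \tfrac{1}{4}\sin 3\theta$ with $\theta = 2\pi m a/b$ to write
$$S(1,a,b) = \frac{3}{4}\sum_{m=1}^{b-1}\cot\!\left(\frac{\pi m}{b}\right)\sin\!\left(2\pi m\frac{a}{b}\right) - \frac{1}{4}\sum_{m=1}^{b-1}\cot\!\left(\frac{\pi m}{b}\right)\sin\!\left(2\pi m\cdot 3\frac{a}{b}\right).$$
Since $(a,b)=1$ and $b\geq 2$ we have $b\nmid a$; and since $(a,b)=1$ forces $b\mid 3a$ to imply $b\mid 3$, the hypothesis $b\neq 3$ (together with $b\geq 2$) gives $b\nmid 3a$. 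Hence Proposition \ref{x:arxiko} applies to both sums, with $n=1$ and with $n=3$, yielding $\sum_{m=1}^{b-1}\cot(\pi m/b)\sin(2\pi m n a/b) = b(1-2x_n)$ for $n\in\{1,3\}$. Substituting and simplifying $\tfrac{3}{4}b - \tfrac{1}{4}b = \tfrac{1}{2}b$ gives the closed form
$$S(1,a,b) = \frac{b}{2}\bigl(1 - 3x_{1} + x_{3}\bigr) = \frac{b}{2}\left(1 - 3\left\{\frac{a}{b}\right\} + \left\{\frac{3a}{b}\right\}\right).$$

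Next I would observe that $S(1,a,b)$ depends only on $a \bmod b$, so one may assume $1\leq a\leq b-1$; then $\{a/b\} = a/b$ and $\{3a/b\} = 3a/b - \lfloor 3a/b\rfloor$, the $a/b$ terms cancel, and we are left with $S(1,a,b) = \tfrac{b}{2}\bigl(1 - \lfloor 3a/b\rfloor\bigr)$. Because $1\leq a\leq b-1$ forces $0 < 3a/b < 3$, the integer $\lfloor 3a/b\rfloor$ takes only the values $0,1,2$; this recovers at once the trichotomy $S(1,a,b)\in\{b/2,\,0,\,-b/2\}$, and in particular $S(1,a,b) = -b/2$ exactly when $\lfloor 3a/b\rfloor = 2$, i.e. when $2b\leq 3a < 3b$.

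Finally I would convert this inequality into the stated form. Since $(a,b)=1$ and $b\neq 3$ we again have $b\nmid 3a$, so $3a\neq 2b$ and the condition becomes $2b < 3a < 3b$. Setting $k := 3b - 3a - 1$, the right inequality $3a < 3b$ is equivalent to $k\geq 0$ while the left inequality $3a > 2b$ is equivalent to $k\leq b-2$; therefore $S(1,a,b) = -b/2$ if and only if $3b = 3a + k + 1$ for some $k$ with $0\leq k\leq b-2$, as claimed.

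There is no deep obstacle here: once the linearization is in place the argument is essentially bookkeeping. The one point requiring genuine care is the role of the hypothesis $b\neq 3$, which is precisely what guarantees $b\nmid 3a$ and hence both the applicability of Proposition \ref{x:arxiko} to the $n=3$ sum and the exclusion of the boundary case $3a = 2b$ (for $b=3$ the $n=3$ sum degenerates and the closed form changes). One should also check that reducing to $1\leq a\leq b-1$ is harmless for the statement, since the condition $3b = 3a + k + 1$ with $0\leq k\leq b-2$ already forces $a<b$; alternatively, one may keep $\{a/b\}$ and $\{3a/b\}$ symbolic and use $3\{a/b\} - \{3a/b\} = \lfloor 3a/b\rfloor - 3\lfloor a/b\rfloor$ to reach the same conclusion without any reduction.
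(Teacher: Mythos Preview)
Your proof is correct and considerably more direct than the paper's. Both arguments rest on the same identity $S(1,a,b)=\tfrac{b}{2}(1-3x_1+x_3)$, which is Proposition~\ref{x:prop1} for $n=1$ rearranged (and which you rederive from the triple-angle formula). From there, however, the paper takes a longer route: it introduces the auxiliary integer $k$ at the outset (chosen so that $b\mid 3a+k+1$), brings in the exponential-sum quantity $E(1,k)$ and the parameter $\nu=\lfloor (a+k)/b\rfloor$, establishes the relation $(3\nu+2)b=(3a+k+1)+3E(1,k)+2S(1,a,b)$ (Proposition~\ref{x:vasiko}), and then runs a case analysis over $\nu\in\{0,1\}$ and $E(1,k)\in\{0,b\}$. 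You instead reduce to $1\le a\le b-1$, evaluate $x_1$ and $x_3$ directly, and collapse the formula to $S(1,a,b)=\tfrac{b}{2}\bigl(1-\lfloor 3a/b\rfloor\bigr)$; the trichotomy and all three characterizations (Propositions~\ref{x:0}, \ref{x:b/2}, \ref{x:-b/2}) then fall out at once from the single observation $\lfloor 3a/b\rfloor\in\{0,1,2\}$. Your approach buys simplicity and unification; the paper's approach, by carrying the specific $k$ with $b\mid 3a+k+1$ throughout, makes the connection to that distinguished residue explicit from the start, but at the cost of more bookkeeping. Note that your $k=3b-3a-1$ is indeed the same $k$ the paper singles out, since $3a+k+1=3b\equiv 0\pmod b$.
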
 
\begin{cor}Let $a$, $b\in\mathbb{N}$, $b\geq 2$, where $(a,b)=1$ and $b\neq 3$. Then, the number of integers $a$, such that $1\leq a \leq b-1$ and $S(1,a,b)=-b/2$, is given by the following formula
$$\#\left\{a\:|\:S(1,a,b)=-\frac{b}{2}\right\}=\phi\left(b,\left\lceil \frac{2b+1}{3}\right\rceil,\left\lfloor \frac{3b-1}{3}\right\rfloor\right).$$
\end{cor}
\begin{proposition}
Let $n$, $A$, $B\in\mathbb{N}$, $n>1$. Then, we have
$$\phi(n,A,B)=\frac{B-A}{n}\phi(n)+\delta_{n,A}+O\left(\sum_{d|n}\mu(d)^2\right)\:,$$
where $\delta_{n,A}=1$ if $(n,A)=1$ and $0$ otherwise.
\end{proposition}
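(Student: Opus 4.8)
The plan is to reduce the count to the classical coprime‑counting function
$$\Phi(n,x):=\#\{k\in\mathbb{N}\,:\,1\le k\le x,\ (k,n)=1\}$$
and then to apply M\"obius inversion. First I would peel off the left endpoint: the index $k=A$ contributes exactly $\delta_{n,A}$ to $\phi(n,A,B)$, while the indices with $A+1\le k\le B$ contribute $\Phi(n,B)-\Phi(n,A)$. Hence
$$\phi(n,A,B)=\delta_{n,A}+\Phi(n,B)-\Phi(n,A)\:.$$

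Next, starting from the identity $\sum_{d\mid (k,n)}\mu(d)=[(k,n)=1]$ and interchanging the order of summation, one obtains $\Phi(n,x)=\sum_{d\mid n}\mu(d)\lfloor x/d\rfloor$ for every positive integer $x$. Writing $\lfloor x/d\rfloor=x/d-\{x/d\}$ and subtracting the instances $x=B$ and $x=A$ gives
$$\Phi(n,B)-\Phi(n,A)=(B-A)\sum_{d\mid n}\frac{\mu(d)}{d}+\sum_{d\mid n}\mu(d)\left(\left\{\frac{A}{d}\right\}-\left\{\frac{B}{d}\right\}\right)\:.$$
The first sum is the standard evaluation $\sum_{d\mid n}\mu(d)/d=\prod_{p\mid n}(1-1/p)=\phi(n)/n$, which produces the main term $\frac{B-A}{n}\phi(n)$.

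It then remains only to estimate the fluctuation sum. Since every fractional part lies in $[0,1)$ we have $\left|\left\{\tfrac{A}{d}\right\}-\left\{\tfrac{B}{d}\right\}\right|<1$, so that sum is, in absolute value, at most $\sum_{d\mid n}|\mu(d)|=\sum_{d\mid n}\mu(d)^2$, which is exactly the asserted error term (it equals $2^{\omega(n)}$, the number of squarefree divisors of $n$). Assembling the three displays yields the claimed formula. I do not expect a genuine obstacle here: the argument is essentially routine, and the only point that needs a little care is the endpoint bookkeeping that isolates precisely $\delta_{n,A}$ — this is exactly why the main term should carry the factor $B-A$ rather than $B-A+1$, and why no slack has to be borrowed from the error term to absorb a stray $\phi(n)/n$.
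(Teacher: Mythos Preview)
Your argument is correct and follows essentially the same route as the paper: reduce to the identity $\phi(n,A,B)=\Phi(n,B)-\Phi(n,A)+\delta_{n,A}$, express $\Phi(n,x)=\sum_{d\mid n}\mu(d)\lfloor x/d\rfloor$ via M\"obius, and bound the fractional-part remainder by $\sum_{d\mid n}\mu(d)^2$. The only cosmetic difference is that you isolate $\delta_{n,A}$ by peeling off the index $k=A$ directly, whereas the paper first writes $\phi(n,A,B)=\sum_{d\mid n}\mu(d)(\lfloor B/d\rfloor-\lceil A/d\rceil)$ and then converts the ceiling to a floor, producing the extra terms $-\sum_{d\mid n}\mu(d)+\sum_{d\mid n,\,d\mid A}\mu(d)=\delta_{n,A}$ (using $n>1$); the two derivations are equivalent.
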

 \vspace{5mm}
\section{Preliminaries}
\vspace{5mm}
\begin{proposition}$\label{x:prop1}$
For every $a$, $b$, $n\in\mathbb{N}$, $b\geq 2$, such that $b\not|3n$, we have
$$x_{3n}=3x_n-1+\frac{2}{b}S(n,a,b)\:,$$
where $x_n:=\left\{na/b\right\}$ and
$$S(n,a,b):=\sum_{m=1}^{b-1}\cot\left(\frac{\pi m}{b}\right)\sin^3\left(2\pi mn\frac{a}{b}\right)\:.$$
\end{proposition}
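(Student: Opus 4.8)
The plan is to reduce the cubic cotangent sum to two \emph{linear} ones and then feed them into Proposition~\ref{x:arxiko}. The only elementary input needed is the power‑reduction identity
$$\sin^{3}\theta=\frac{3\sin\theta-\sin 3\theta}{4}\:.$$
Writing $\theta_m=2\pi m n\,a/b$ and substituting into the definition of $S(n,a,b)$, I would split the sum as
$$S(n,a,b)=\frac{3}{4}\sum_{m=1}^{b-1}\cot\left(\frac{\pi m}{b}\right)\sin\left(2\pi m n\frac{a}{b}\right)-\frac{1}{4}\sum_{m=1}^{b-1}\cot\left(\frac{\pi m}{b}\right)\sin\left(2\pi m (3n)\frac{a}{b}\right)\:,$$
the point being that $3\theta_m=2\pi m(3n)a/b$, so the second sum is exactly the sum occurring in Proposition~\ref{x:arxiko}, but with $3n$ in place of $n$.

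Next I would invoke Proposition~\ref{x:arxiko} twice. Solving its second displayed identity for the cotangent sum gives, whenever $b\nmid ka$,
$$\sum_{m=1}^{b-1}\cot\left(\frac{\pi m}{b}\right)\sin\left(2\pi m k\frac{a}{b}\right)=b\,(1-2x_k)\:.$$
Applying this with $k=n$ and with $k=3n$ and collecting terms yields
$$S(n,a,b)=\frac{3}{4}\,b\,(1-2x_n)-\frac{1}{4}\,b\,(1-2x_{3n})=\frac{b}{2}\left(1-3x_n+x_{3n}\right)\:.$$
Dividing by $b/2$ and rearranging gives $x_{3n}=3x_n-1+\frac{2}{b}S(n,a,b)$, which is the assertion.

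The only point requiring genuine attention — the ``main obstacle'', modest as it is — is the legitimacy of the two applications of Proposition~\ref{x:arxiko}: we need not merely $b\nmid 3na$ but also $b\nmid na$. The hypothesis $b\nmid 3n$ takes care of the former (after noting that, with $\gcd(a,b)=1$ as in the applications of interest, the conditions $b\nmid k$ and $b\nmid ka$ coincide); it also takes care of the latter, since $b\mid n$ would force $b\mid 3n$. I would flag explicitly that the divisibility condition on $na$ cannot be dropped: if $b\mid na$ then $x_n=x_{3n}=0$ and every sine in $S(n,a,b)$ vanishes, while the right‑hand side of the claimed identity equals $-1$. Granting the hypotheses, the computation above is the whole proof; the sole routine step is the trigonometric bookkeeping that turns $\frac34 b(1-2x_n)-\frac14 b(1-2x_{3n})$ into $\frac b2(1-3x_n+x_{3n})$.
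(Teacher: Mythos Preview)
Your argument is correct and is essentially the paper's own proof run in the opposite direction: the paper starts from $x_{3n}$, expands $\sin(3\theta)=3\sin\theta-4\sin^{3}\theta$, and recognises $x_n$ and $S(n,a,b)$ in the result, whereas you start from $S(n,a,b)$, apply the equivalent identity $\sin^{3}\theta=(3\sin\theta-\sin 3\theta)/4$, and recognise $x_n$ and $x_{3n}$. Your discussion of the divisibility hypotheses is in fact more careful than the paper's, which tacitly passes from $b\nmid 3n$ to $b\nmid 3na$ and $b\nmid na$ without comment.
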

\begin{proof}
We know that 
$$x_n=\frac{1}{2}-\frac{1}{2b}\sum_{m=1}^{b-1}\cot\left(\frac{\pi m}{b}\right)\sin\left(2\pi mn\frac{a}{b}\right)\:,$$
for every $b\not|n$. So, we get
$$x_{3n}=\frac{1}{2}-\frac{1}{2b}\sum_{m=1}^{b-1}\cot\left(\frac{\pi m}{b}\right)\sin\left(2\pi m(3n)\frac{a}{b}\right)\:,$$
for every $b\not|3n$. Thus
$$x_{3n}=\frac{1}{2}-\frac{1}{2b}\sum_{m=1}^{b-1}\cot\left(\frac{\pi m}{b}\right)\left(3\sin\left(2\pi mn\frac{a}{b}\right)-4\sin^3\left(2\pi mn\frac{a}{b}\right)\right)\:,$$
for every $b\not|3n$. So
\begin{eqnarray}
x_{3n}&=&\frac{1}{2}-\frac{3}{2b}\sum_{m=1}^{b-1}\cot\left(\frac{\pi m}{b}\right)\sin\left(2\pi mn\frac{a}{b}\right)+\frac{4}{2b}\sum_{m=1}^{b-1}\cot\left(\frac{\pi m}{b}\right)\sin^3\left(2\pi mn\frac{a}{b}\right)\nonumber\\
&=&3\left(\frac{1}{2}-\frac{1}{2b}\sum_{m=1}^{b-1}\cot\left(\frac{\pi m}{b}\right)\sin\left(2\pi mn\frac{a}{b}\right)\right)-1+\frac{2}{b}\sum_{m=1}^{b-1}\cot\left(\frac{\pi m}{b}\right)\sin^3\left(2\pi mn\frac{a}{b}\right)\:,\nonumber
\end{eqnarray}
for every $b\not|3n$. Hence
$$x_{3n}=3x_n-1+\frac{2}{b}S(n,a,b)\:,$$
for every $b\not|3n$.
\end{proof}
\begin{lemma}$\label{x:lem2}$
For every $a$, $b$, $n\in\mathbb{N}$, $b\geq 2$ and every $k\in\mathbb{N}\cup{\left\{0\right\}}$, we have
$$\left\{\frac{na+k}{b}\right\}=x_n+\frac{k}{b}-\frac{1}{b}E(n,k)\:,$$
where
$$E(n,k):=\sum_{\lambda=na+1}^{na+k}\sum_{m=0}^{b-1}e^{2\pi im\lambda/b}\:,$$
for $k\in\mathbb{N}$ and $E(n,0):=0$.
\end{lemma}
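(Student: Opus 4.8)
The plan is to evaluate the inner exponential sum explicitly and thereby recognize $E(n,k)$ as a counting function, and then to compare floor functions. First I would record the standard orthogonality fact that for any integer $\lambda$,
$$\sum_{m=0}^{b-1}e^{2\pi i m\lambda/b}=\begin{cases} b, & b\mid\lambda,\\ 0,& b\nmid\lambda,\end{cases}$$
which follows by summing a finite geometric series. Consequently $\frac{1}{b}E(n,k)=\sum_{\lambda=na+1}^{na+k}\mathbf 1_{b\mid\lambda}$ is exactly the number of multiples of $b$ in the interval $(na,\,na+k]$, and this count is the familiar quantity $\lfloor (na+k)/b\rfloor-\lfloor na/b\rfloor$.

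With that identity in hand the rest is a rearrangement. I would write $\left\{\frac{na+k}{b}\right\}=\frac{na+k}{b}-\left\lfloor\frac{na+k}{b}\right\rfloor$ and $x_n=\frac{na}{b}-\left\lfloor\frac{na}{b}\right\rfloor$, subtract, and substitute the formula for the difference of floors:
$$\left\{\frac{na+k}{b}\right\}-x_n-\frac{k}{b}=-\left(\left\lfloor\frac{na+k}{b}\right\rfloor-\left\lfloor\frac{na}{b}\right\rfloor\right)=-\frac{1}{b}E(n,k),$$
which is the claim. The case $k=0$ is immediate from the convention $E(n,0)=0$.

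An alternative route I would keep in reserve, in case one prefers to avoid quoting the floor-difference identity, is induction on $k$. The base case $k=0$ is the definition, and for the inductive step one distinguishes according to whether $na+k\equiv b-1\pmod b$ or not: in the first subcase $na+k+1$ is a multiple of $b$, so $\left\{\frac{na+k+1}{b}\right\}=0$ while $E(n,k+1)=E(n,k)+b$; in the second subcase $\left\{\frac{na+k+1}{b}\right\}=\left\{\frac{na+k}{b}\right\}+\frac1b$ and $E(n,k+1)=E(n,k)$. In either subcase the inductive hypothesis propagates verbatim.

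There is no deep obstacle here; the only point requiring care is the bookkeeping of endpoint conventions. The sum defining $E(n,k)$ runs over $na+1\le\lambda\le na+k$, i.e.\ over the half-open interval $(na,\,na+k]$, so the matching floor-difference identity must be invoked with exactly that convention, and one should double-check the boundary behaviour in the inductive version (the transition at $na+k\equiv b-1\pmod b$) rather than leave it implicit.
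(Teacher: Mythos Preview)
Your argument is correct. The paper's proof proceeds slightly differently: it quotes the identity
\[
\left\{\frac{a}{b}\right\}=\frac{a}{b}-\frac{1}{b}\sum_{\lambda=1}^{a}\sum_{m=0}^{b-1}e^{2\pi i m\lambda/b}
\]
from an external reference, applies it with $a$ replaced by $na+k$, and then splits the $\lambda$-sum at $\lambda=na$ so that the first block reconstitutes $x_n$ and the tail is exactly $E(n,k)$. Your route instead evaluates the inner exponential sum by orthogonality, identifies $\tfrac{1}{b}E(n,k)$ with the floor difference $\lfloor (na+k)/b\rfloor-\lfloor na/b\rfloor$, and finishes with the definition of the fractional part. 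The two computations are equivalent at bottom (the quoted identity is itself a consequence of the same orthogonality), but your version is self-contained and makes the combinatorial meaning of $E(n,k)$ explicit, while the paper's version is a two-line subtraction once the cited formula is granted. Your alternative inductive argument is also fine and matches the case split recorded later in the paper as Proposition~\ref{x:lem2}.
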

\begin{proof}
We know (see \cite{Ras}, Section 2) that 
$$\left\{\frac{a}{b}\right\}=\frac{a}{b}-\frac{1}{b}\sum_{\lambda=1}^{a}\sum_{m=0}^{b-1}e^{2\pi im\lambda/b}\:.$$
Thus, we can write
\begin{eqnarray}
\left\{\frac{na+k}{b}\right\}&=&\frac{na+k}{b}-\frac{1}{b}\sum_{\lambda=1}^{na+k}\sum_{m=0}^{b-1}e^{2\pi im\lambda/b}\nonumber\\
&=&\frac{na}{b}+\frac{k}{b}-\frac{1}{b}\sum_{\lambda=1}^{na}\sum_{m=0}^{b-1}e^{2\pi im\lambda/b}-\frac{1}{b}\sum_{\lambda=na+1}^{na+k}\sum_{m=0}^{b-1}e^{2\pi im\lambda/b}\nonumber\\
&=&\left(\frac{na}{b}-\frac{1}{b}\sum_{\lambda=1}^{na}\sum_{m=0}^{b-1}e^{2\pi im\lambda/b}\right)+\left(\frac{k}{b}-\frac{1}{b}\sum_{\lambda=na+1}^{na+k}\sum_{m=0}^{b-1}e^{2\pi im\lambda/b}\right)\nonumber\\
&=&\left\{\frac{na}{b}\right\}+\frac{k}{b}-\frac{1}{b}\sum_{\lambda=na+1}^{na+k}\sum_{m=0}^{b-1}e^{2\pi im\lambda/b}\:,\nonumber
\end{eqnarray}
for every $k\in\mathbb{N}$.\\
For the case when $k=0$, the result is clear.

\end{proof}
The following proposition also holds.
\begin{proposition}$\label{x:lem2}$
Let $a$, $b\in\mathbb{N}$, with $b\geq 2$.\\
If $a\not\equiv0\;(\bmod\;b)$, then
$$\left\{\frac{a}{b}\right\}=\left\{\frac{a-1}{b}\right\}+\frac{1}{b}\:.$$
If $a\equiv0\;(\bmod\;b)$, then
$$\left\{\frac{a-2}{b}\right\}=1-\frac{2}{b}\:.$$
\end{proposition}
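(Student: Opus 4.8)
The plan is to reduce both statements to the elementary description of the fractional part of an integer divided by $b$: if $m\in\mathbb{Z}$ is written $m=qb+r$ with $q\in\mathbb{Z}$ and $0\le r\le b-1$, then $\{m/b\}=r/b$; in other words $\{m/b\}$ depends only on $m$ modulo $b$ and equals $(m\bmod b)/b$. Nothing deeper than this is needed.

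First I would treat the case $a\not\equiv 0\ (\bmod\ b)$. Write $a=qb+r$ with $0\le r\le b-1$; the hypothesis $a\not\equiv 0\ (\bmod\ b)$ says exactly that $r\ge 1$. Then $a-1=qb+(r-1)$ with $0\le r-1\le b-2\le b-1$, so $r-1$ is the remainder of $a-1$ upon division by $b$. Hence $\{(a-1)/b\}=(r-1)/b$, while $\{a/b\}=r/b$, and subtracting gives $\{a/b\}=\{(a-1)/b\}+1/b$. (The hypothesis is genuinely needed: if $a\equiv 0\ (\bmod\ b)$ then $\{a/b\}=0$ whereas $\{(a-1)/b\}+1/b=(b-1)/b+1/b=1$.)

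Next I would treat the case $a\equiv 0\ (\bmod\ b)$. Since $a\in\mathbb{N}$ and $b\mid a$, we may write $a=qb$ with $q\ge 1$, so $a-2=(q-1)b+(b-2)$, and because $b\ge 2$ we have $0\le b-2\le b-1$, i.e.\ $b-2$ is the remainder of $a-2$ modulo $b$. Therefore $\{(a-2)/b\}=(b-2)/b=1-\tfrac{2}{b}$, as claimed.

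There is really no obstacle here; the only point demanding any attention is checking that after subtracting $1$ (respectively $2$) the resulting remainder still lies in $\{0,1,\dots,b-1\}$, which is precisely what the hypotheses $a\not\equiv 0\ (\bmod\ b)$ and $b\ge 2$ ensure. If one prefers to stay within the formalism already set up, the same conclusion follows by applying the ``incrementation rule'' $\{(c+1)/b\}=\{c/b\}+\tfrac{1}{b}$, valid whenever $b\nmid c+1$ — itself an immediate consequence of the above remainder description, or of the $E(n,k)$-Lemma above upon evaluating $\sum_{m=0}^{b-1}e^{2\pi i m\lambda/b}$ as $b$ or $0$ — with $c=a-1$ for the first identity, and successively with $c=a-1$ and $c=a-2$ for the second.
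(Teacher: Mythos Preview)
Your proof is correct; the remainder argument $a=qb+r$ handles both parts cleanly, and you correctly identify where the hypotheses $a\not\equiv 0\pmod b$ and $b\ge 2$ are used to keep the shifted remainder in $\{0,\dots,b-1\}$.

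As for comparison: the paper does not actually prove this proposition. It is stated without proof (introduced by ``The following proposition also holds'') and later invoked as Proposition~5.2 of \cite{Ras}. So your elementary argument fills a gap that the paper leaves to a citation. Your final remark connecting the identity to the $E(n,k)$-lemma via the evaluation of $\sum_{m=0}^{b-1}e^{2\pi i m\lambda/b}$ is apt and is in fact closer to the spirit of the surrounding material in the paper, but the direct division-with-remainder argument you give first is the cleanest route and entirely sufficient.
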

\begin{proposition}$\label{x:vasiko}$
Let $a$, $b\in\mathbb{N}$, where $(a,b)=1$, $a\geq b \geq 2$ and $b\neq 3$. Then
$$(3\nu+2)b=(3a+k+1)+3E(1,k)+2S(1,a,b)\:,$$
for some integer $k$, $0\leq k\leq b-2$, such that $3a+k+1\equiv 0\:(\bmod\;b)\:,$ where
$$\nu:=\left\lfloor \frac{a+k}{b}\right\rfloor$$
and
$$S(n,a,b)=\sum_{m=1}^{b-1}\cot\left(\frac{\pi m}{b}\right)\sin^3\left(2\pi mn\frac{a}{b}\right)\:.$$
\end{proposition}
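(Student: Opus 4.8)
The plan is to derive the asserted identity by combining the tripling relation for fractional parts (Proposition~\ref{x:prop1} with $n=1$) with the shifting formula $\left\{\frac{na+k}{b}\right\}=x_n+\frac{k}{b}-\frac{1}{b}E(n,k)$ of the preliminary Lemma, and then to convert the resulting statement about $\left\lfloor 3a/b\right\rfloor$ into an integer identity by identifying the residue $3a\bmod b$.

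First I would fix $k$. Since we want $0\le k\le b-2$ and $3a+k+1\equiv 0\pmod{b}$, take $k$ to be the unique element of $\{0,1,\dots,b-1\}$ with $k\equiv -3a-1\pmod{b}$. The key point — and the one I expect to require the most care — is checking that this $k$ is genuinely $\le b-2$: if $k=b-1$ then $b\mid 3a$, so $(a,b)=1$ forces $b\mid 3$, contradicting $b\ge 2$ and $b\ne 3$. With $k$ so chosen we also have $3a\not\equiv 0\pmod{b}$, and since $1\le k+1\le b-1$ the residue is $3a\bmod b=b-k-1$, i.e. $b\left\lfloor 3a/b\right\rfloor=3a-b+k+1$.

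Next, since $b\ge 2$ and $b\ne 3$ give $b\nmid 3$, Proposition~\ref{x:prop1} with $n=1$ applies and yields $x_3=3x_1-1+\frac{2}{b}S(1,a,b)$, where $x_j=\{ja/b\}$. I would then invoke the preliminary Lemma with $n=1$ to write $x_1=\left\{\frac{a+k}{b}\right\}-\frac{k}{b}+\frac{1}{b}E(1,k)$ and substitute $\left\{\frac{a+k}{b}\right\}=\frac{a+k}{b}-\nu$ with $\nu=\left\lfloor\frac{a+k}{b}\right\rfloor$. Plugging this into the tripling relation, the terms in $k$ cancel and one is left with $x_3=\frac{3a}{b}-3\nu-1+\frac{3}{b}E(1,k)+\frac{2}{b}S(1,a,b)$.

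Finally, since $x_3=\{3a/b\}=\frac{3a}{b}-\left\lfloor 3a/b\right\rfloor$, comparing this with the displayed expression for $x_3$ and clearing the denominator $b$ gives $b\left\lfloor 3a/b\right\rfloor=(3\nu+1)b-3E(1,k)-2S(1,a,b)$. Equating this with $b\left\lfloor 3a/b\right\rfloor=3a-b+k+1$ from the first step and rearranging yields precisely
$$(3\nu+2)b=(3a+k+1)+3E(1,k)+2S(1,a,b),$$
which is the claim. The hypothesis $a\ge b$ is not actually needed for the identity itself (it only ensures $\nu\ge 1$). Overall this is a routine assembly of the preliminary results once the correct $k$ is pinned down, so apart from the divisibility check in the second paragraph and careful bookkeeping of the fractional-part identities, no genuinely hard step is involved.
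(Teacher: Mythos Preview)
Your argument is correct, and it is in fact a cleaner route than the paper's. The paper also fixes the same $k$ and uses the tripling relation $x_3=3x_1-1+\tfrac{2}{b}S(1,a,b)$ together with the shifting lemma for $x_1$, but then applies the shifting lemma a second time with $n=3$ to write $x_3=\{(3a+k)/b\}-\tfrac{k}{b}+\tfrac{1}{b}E(3,k)$; this forces it to verify separately that $E(3,k)=0$ and to compute $\{(3a+k)/b\}=1-\tfrac{1}{b}$ via an auxiliary proposition. You bypass all of that by writing $x_3=\tfrac{3a}{b}-\lfloor 3a/b\rfloor$ directly and reading off $b\lfloor 3a/b\rfloor=3a-b+k+1$ from the congruence defining $k$. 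Your observation that the hypothesis $a\ge b$ plays no role in the identity (it only forces $\nu\ge 1$) is also correct.
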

\begin{proof}
Since $b\neq 3$ and $(a,b)=1$, it is clear that $b$ does not divide $3a$. Thus, $b$ should divide one of the consecutive integers
$$3a+1,\;3a+2,\;\ldots,\;3a+(b-1)\:.$$
In other words, there exists $k$, with $0\leq k\leq b-2$, such that 
$$3a+k+1\equiv 0\:(\bmod\;b)\:.$$
But, then it is obvious that $b\not|3a+k$. Hence, by Proposition 5.2 of \cite{Ras}, we get
\[
\left\{\frac{3a+k}{b}\right\}=\left\{\frac{3a+k-1}{b}\right\}+\frac{1}{b}\:.\tag{1}
\]
Also, since $3a+k+1\equiv 0\:(\bmod\;b)$, by Proposition 5.2 of \cite{Ras}, it follows that
$$\left\{\frac{3a+k-1}{b}\right\}=1-\frac{2}{b}\:.$$
So, by the above relation and (1), we obtain
\[
\left\{\frac{3a+k}{b}\right\}=1-\frac{1}{b}\:.\tag{2}
\]
However, by Lemma $\ref{x:lem2}$ we know that 
$$\left\{\frac{na+k}{b}\right\}=x_n+\frac{k}{b}-\frac{1}{b}E(n,k)\:,$$
for every $n\in\mathbb{N}$. Thus, this yields
$$\left\{\frac{3a+k}{b}\right\}=x_3+\frac{k}{b}-\frac{1}{b}E(3,k)$$
and
$$\left\{\frac{a+k}{b}\right\}=x_1+\frac{k}{b}-\frac{1}{b}E(1,k)\:.$$
Therefore,
\[
x_3=\left\{\frac{3a+k}{b}\right\}-\frac{k}{b}+\frac{1}{b}E(3,k)\:\tag{3.1}
\]
and
\[
x_1=\left\{\frac{a+k}{b}\right\}-\frac{k}{b}+\frac{1}{b}E(1,k)\:.\tag{3.2}
\]
But $E(3,k)=0$, since we have assumed that $3a+k+1\equiv0\;(\bmod\;b)$. Thus, $3a+j\not\equiv0\;(\bmod\;b)$, $1\leq j\leq k$. Otherwise $b|(3a+k+1)-(3a+j)\Rightarrow b|(k-j)+1$ and for $k\geq1$ it holds $0\leq k-j\leq k-1\leq b-3 \Rightarrow 1\leq(k-j)+1\leq b-2$. If $k=0$ then by definition $E(3,0)=0$.\\
\noindent In addition, by Proposition $\ref{x:prop1}$ we know that 
$$x_3=3x_1-1+\frac{2}{b}S(1,a,b)\:.$$
Hence, by relations (3.1), (3.2), we obtain
$$\left\{\frac{3a+k}{b}\right\}-\frac{k}{b}+\frac{1}{b}E(3,k)=3\left\{\frac{a+k}{b}\right\}-\frac{3k}{b}+\frac{3}{b}E(1,k)-1+\frac{2}{b}S(1,a,b)$$
or
$$\left\{\frac{3a+k}{b}\right\}=3\left\{\frac{a+k}{b}\right\}-\frac{2k}{b}+\frac{3}{b}E(1,k)-1+\frac{2}{b}S(1,a,b).$$
Hence, by (2) we get
\[
1-\frac{1}{b}=3\left\{\frac{a+k}{b}\right\}-\frac{2k}{b}+\frac{3}{b}E(1,k)-1+\frac{2}{b}S(1,a,b).\tag{4}
\]
%So, if $Z=a/b$, we get
%\[
%1-\frac{1}{b}=3\left\{Z+\frac{k}{b}\right\}-\frac{2k}{b}+\frac{3}{b}E(1,k)-1+\frac{2}{b}S(1,a,b)\:.\tag{4}
%\] 
But since $a\geq b$, it is clear that
$$\left\lfloor \frac{a+k}{b}\right\rfloor=\nu,\ \ \nu\in\mathbb{N}\:.$$
Therefore, by (4) we get\\
%\textit{Case 1.} If $\left\lfloor Z+k/b\right\rfloor=1$, then
\begin{eqnarray}
1-\frac{1}{b}&=&3\left(\frac{a+k}{b}\right)-3\nu-\frac{2k}{b}+\frac{3}{b}E(1,k)-1+\frac{2}{b}S(1,a,b)\nonumber\\
&=&3\frac{a}{b}+\frac{k}{b}-(3\nu+1)+\frac{3}{b}E(1,k)+\frac{2}{b}S(1,a,b)\:.\nonumber
\end{eqnarray}
Thus
$$\frac{a}{b}=\frac{3\nu+1}{3}-\frac{k+1}{3b}-\frac{1}{b}E(1,k)-\frac{2}{3b}S(1,a,b)+\frac{1}{3}\:$$
or
$$3a=(3\nu+1)b-k-1-3E(1,k)-2S(1,a,b)+b\:$$
or
$$(3\nu+2)b=(3a+k+1)+3E(1,k)+2S(1,a,b)\:.$$
%%%%%%%%%%
\begin{comment}
\textit{Case 2.} If $\left\lfloor Z+k/b\right\rfloor=2$, then similarly we obtain
\begin{eqnarray}
1-\frac{1}{b}&=&3\left(Z+\frac{k}{b}\right)-6-\frac{2k}{b}+\frac{3}{b}E(1,k)-1+\frac{2}{b}S(1,a,b)\nonumber\\
&=&3Z+\frac{k}{b}-7+\frac{3}{b}E(1,k)+\frac{2}{b}S(1,a,b)\:.\nonumber
\end{eqnarray}
If we carry out the calculations, we get
$$\frac{a}{b}=\frac{8}{3}-\frac{k+1}{3b}-\frac{1}{b}E(1,k)-\frac{2}{3b}S(1,a,b)\:,$$
or
$$3a=8b-k-1-3E(1,k)-2S(1,a,b)\:,$$
or
$$8b=(3a+k+1)+3E(1,k)+2S(1,a,b)\:.$$
But, we know that $3a+k+1\equiv 0\;(\bmod\;b)$. Also, $E(1,k)\equiv 0\;(\bmod\;b)$, since its terms are either $b$ or $0$. Hence,
$$2S(1,a,b)\equiv 0\;(\bmod\;b)\:.$$
Since $b$ is an even integer, it follows that $S(1,a,b)\in\mathbb{Z}$.
\end{comment}
\end{proof}
\begin{cor}
Let $a$, $b\in\mathbb{N}$, where $(a,b)=1$, $a\geq b \geq 2$ and $b$ is even. Then
$$2S(1,a,b)\equiv0\:(\bmod\; b)$$
and therefore $S(1,a,b)$ is an integer.
\end{cor}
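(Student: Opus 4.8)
The plan is to read the congruence straight off Proposition \ref{x:vasiko}, which has essentially been engineered for this purpose. Since $b$ is even we have $b\neq 3$, and together with $(a,b)=1$ and $a\geq b\geq 2$ the hypotheses of Proposition \ref{x:vasiko} are met. It therefore supplies an integer $k$ with $0\leq k\leq b-2$ and $3a+k+1\equiv 0\;(\bmod\;b)$ for which
$$(3\nu+2)b=(3a+k+1)+3E(1,k)+2S(1,a,b)\:,$$
where $\nu=\lfloor(a+k)/b\rfloor$. I would then rearrange this to isolate $2S(1,a,b)$ and inspect each remaining term modulo $b$.

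The term $(3\nu+2)b$ is manifestly divisible by $b$, and $3a+k+1\equiv 0\;(\bmod\;b)$ by the choice of $k$. The only point deserving a sentence of justification is that $b\mid E(1,k)$: unwinding the definition,
$$E(1,k)=\sum_{\lambda=a+1}^{a+k}\sum_{m=0}^{b-1}e^{2\pi im\lambda/b}\:,$$
and the inner geometric sum over $m$ equals $b$ when $b\mid\lambda$ and $0$ otherwise, so $E(1,k)$ is a sum of terms each equal to $0$ or $b$. Hence $3E(1,k)\equiv 0\;(\bmod\;b)$, and combining the three observations with the displayed identity yields $2S(1,a,b)\equiv 0\;(\bmod\;b)$.

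For the integrality claim, note that the same identity exhibits $2S(1,a,b)=(3\nu+2)b-(3a+k+1)-3E(1,k)$ as an integer, and $b\mid 2S(1,a,b)$. Writing $b=2c$ with $c\in\Z$ (using that $b$ is even), divisibility by $b$ becomes $c\mid S(1,a,b)$, so $S(1,a,b)$ is an integer, indeed a multiple of $b/2$ — which dovetails with the trichotomy $S(1,a,b)\in\{0,\pm b/2\}$ recorded in the introduction. I do not anticipate a genuine obstacle here; the only mild subtlety is recognizing that $E(1,k)$ is a $\Z$-linear combination of complete exponential sums $\sum_{m=0}^{b-1}e^{2\pi im\lambda/b}$, each of which is $0$ or $b$, so that the reduction modulo $b$ is immediate.
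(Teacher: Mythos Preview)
Your proposal is correct and follows essentially the same route as the paper: invoke Proposition~\ref{x:vasiko}, observe that $3a+k+1$ and $E(1,k)$ are each divisible by $b$ (the latter because each inner exponential sum is $0$ or $b$), and read off $b\mid 2S(1,a,b)$; then use evenness of $b$ to pass to integrality of $S(1,a,b)$. Your write-up is slightly more explicit than the paper's in justifying $b\mid E(1,k)$ and in spelling out why $b\mid 2S$ with $b$ even forces $S\in\Z$, but the argument is the same.
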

\begin{proof}
We know that $3a+k+1\equiv 0\;(\bmod\;b)$. Also, $E(1,k)\equiv 0\;(\bmod\;b)$, since its terms are either $b$ or $0$. Hence,
$$2S(1,a,b)\equiv 0\;(\bmod\;b)\:.$$
Since $b$ is an even integer, it follows that $S(1,a,b)\in\mathbb{Z}$.\\
\end{proof}
%
%
%%%%%%%%%%%%%%%%%%%%%%%%%%%%%%%%%%%%%%%%%%%%%%%%%%
%
\section{Computing the values of $S(1,a,b)$}
\vspace{5mm}
By Proposition $\ref{x:prop1}$ and since $0\leq x_3<1$, it easily follows that 
$$\left|S(1,a,b)\right|<b\:.$$
The above inequality and the fact that $S(1,a,b)$ is always an integer when $b$ is even, lead us to the assumption that the values of this cotangent sum could possibly be very specific. Some numerical experiments revealed that the value of $S(1,a,b)$ was either $0$ or $\pm b/2$. Hence, with some further investigation we obtained the following result.
\begin{proposition}$\label{x:times}$
Let $a$, $b\in\mathbb{N}$, where $(a,b)=1$, $a\geq b \geq 2$ and $b\neq 3$. Then
$$S(1,a,b)=0\ \text{or}\ \pm b/2\:.$$
\end{proposition}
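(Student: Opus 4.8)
The plan is to feed the key identity of Proposition~\ref{x:vasiko} into the divisibility/size constraints already established. From Proposition~\ref{x:vasiko} we have
$$(3\nu+2)b=(3a+k+1)+3E(1,k)+2S(1,a,b),$$
where $3a+k+1\equiv 0\ (\bmod\ b)$ and, crucially, each $E(1,k)$ is a sum of complete character sums, hence $E(1,k)\equiv 0\ (\bmod\ b)$. Therefore $2S(1,a,b)\equiv 0\ (\bmod\ b)$, so $S(1,a,b)\in\tfrac{b}{2}\mathbb Z$; when $b$ is odd, combined with $(b,2)=1$ this even gives $S(1,a,b)\in b\mathbb Z$. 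On the other hand, Proposition~\ref{x:prop1} together with $0\le x_3<1$ (and $0\le x_1<1$) forces $|S(1,a,b)|<b$. The only elements of $\tfrac{b}{2}\mathbb Z$ in the open interval $(-b,b)$ are $0$ and $\pm b/2$, which is exactly the claimed trichotomy. (In the odd case the only multiple of $b$ in $(-b,b)$ is $0$, consistent with the general statement.)

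First I would record the size bound carefully. From $x_3=3x_1-1+\tfrac{2}{b}S(1,a,b)$ we get $S(1,a,b)=\tfrac{b}{2}(x_3-3x_1+1)$; since $x_1\in[0,1)$ and $x_3\in[0,1)$, the quantity $x_3-3x_1+1$ lies strictly between $-2$ and $2$, giving $|S(1,a,b)|<b$. I would note this needs $b\nmid a$ and $b\nmid 3a$, which hold because $(a,b)=1$ and $b\neq 3$; this is also why the hypothesis $b\neq 3$ is present. Next I would invoke Proposition~\ref{x:vasiko} to produce the integer $k$ with $0\le k\le b-2$ and the displayed identity, then argue the two congruences: $b\mid 3a+k+1$ by the choice of $k$, and $b\mid 3E(1,k)$ because $\sum_{m=0}^{b-1}e^{2\pi i m\lambda/b}$ equals $b$ or $0$ according as $b\mid\lambda$ or not, so $E(1,k)$ is itself an integer multiple of $b$. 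Reducing the identity mod $b$ then kills the left side and the first two terms on the right, leaving $2S(1,a,b)\equiv 0\ (\bmod\ b)$.

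The only genuinely delicate point is that Proposition~\ref{x:vasiko} is stated under the extra hypothesis $a\ge b$, which matches the hypothesis of the present proposition, so there is nothing to reconcile there; but I would double-check that $S(1,a,b)$ depends only on $a\bmod b$ (it does, since $\sin$ is $2\pi$-periodic in $m a/b$), so the result in fact transfers to all $a$ coprime to $b$ — though that extension is not needed for this statement. I expect the main obstacle to be purely expository: making sure the inequality $|S(1,a,b)|<b$ is genuinely strict (it is, since $x_1,x_3$ never reach $1$ and $x_1\ge 0$), because a non-strict bound would spuriously admit the value $\pm b$. Once strictness is in hand, the conclusion is immediate: $S(1,a,b)\in\{\,t\in\tfrac{b}{2}\mathbb Z : |t|<b\,\}=\{0,\ b/2,\ -b/2\}$.
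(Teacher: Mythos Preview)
Your argument is correct and rests on the same two ingredients the paper uses---the congruence $2S(1,a,b)\equiv 0\pmod b$ read off from Proposition~\ref{x:vasiko} (using $b\mid 3a+k+1$ and $b\mid E(1,k)$), together with the strict bound $|S(1,a,b)|<b$ coming from Proposition~\ref{x:prop1}. The paper's own proof is organized slightly differently: it first uses periodicity to assume $1\le a\le b-1$, then splits into the cases $\nu=0$ and $\nu=1$, and in each case introduces the auxiliary quantity $m=\tfrac{3a+k+1}{2}+\tfrac{3E(1,k)}{2}$ before invoking $b\mid 2m$ and $|S|<b$; your version bypasses the case split and is cleaner.

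One caution: your parenthetical claim that for odd $b$ one obtains $S(1,a,b)\in b\mathbb Z$, hence $S=0$, is incorrect. From $b\mid 2S$ with $\gcd(2,b)=1$ you can only conclude $b\mid S$ if $S$ is already known to be an integer, which it need not be; indeed $S(1,a,b)=\pm b/2$ genuinely occurs for odd $b$ (for instance $b=5$, $a=1$ satisfies the criterion of Proposition~\ref{x:b/2}). This side remark plays no role in your main argument, so simply delete it.
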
 
\begin{proof}
By Proposition $\ref{x:vasiko}$ we know that
\[
2S(1,a,b)=(3\nu+2)b-(3a+k+1)-3E(1,k)\:,\tag{5}
\]
We can consider $a$, such that $1\leq a\leq b-1$ due to the periodicity of $S(1,a,b)$ with period $a$. Thus, since $0\leq k\leq b-2$ we get
$$\frac{1}{b}\leq \frac{a+k}{b}\leq2-\frac{3}{b}\:.$$
Therefore
$$\left\lfloor \frac{a+k}{b}\right\rfloor=0\ \text{or}\ 1\:.$$
In other words, $\nu=0$ or $1$. Hence, we can consider the following cases.\\
\textit{Case 1.} If $\nu=0$, we have
$$2S(1,a,b)=2b-(3a+k+1)-3E(1,k)\:.$$
That is
$$S(1,a,b)=b-\frac{3a+k+1}{2}-\frac{3E(1,k)}{2}\:.$$
Set
$$\frac{3a+k+1}{2}+\frac{3E(1,k)}{2}:=m\in\mathbb{Q}^+\:.$$
Then, we can write
$$S(1,a,b)=b-m\:.$$
However, we know that $\left|S(1,a,b)\right|<b\:$ and thus
$$\left|m-b\right|<b$$
or
\[
0<m<2b\:.\tag{6}
\]
But, since both $3a+k+1$ and $3E(1,k)$ are divisible by $b$, it follows that $2m$ is divisible by $b$. Therefore, we obtain
$$\frac{2m}{b}=r,\  \text{where $r\in\mathbb{N}$}$$
or equivalently
\[
m=\frac{b}{2}\cdot r \tag{7}
\]
By (6), (7) it follows that the only possible values for $m$ are
$$m=\frac{b}{2},\:b,\:\frac{3b}{2}\:.$$
Consequently, the only possible values that $S(1,a,b)$ may obtain, in the case when $\nu=0$, are
$$S(1,a,b)=0,\:\pm \frac{b}{2}\:.$$
\textit{Case 2.} If $\nu=1$, by (5) we have
$$2S(1,a,b)=5b-(3a+k+1)-3E(1,k)\:$$
or equivalently
\[
S(1,a,b)=\frac{5b}{2}-m\:,\tag{8}
\]
where $m$ is defined as in Case 1. Thus, similarly to the case when $\nu=0$, we get
$$\left|m-\frac{5b}{2}\right|<b\:,$$
from which it follows that
$$\frac{3b}{2}<m<\frac{7b}{2}\:,\ m\in\mathbb{N}\:.$$
Additionally, we have
$$m=\frac{b}{2}\cdot r,\:r\in\mathbb{N}\:.$$
Hence, the possible values of $m$ are
$$m=2b,\:\frac{5b}{2},\:3b\:.$$
Therefore, by (8) it follows that the only possible values that $S(1,a,b)$ may obtain, in the case when $\nu=1$, are
$$S(1,a,b)=0,\:\pm \frac{b}{2}\:.$$

\end{proof}
Now that we have specified the only values which the cotangent sum $S(1,a,b)$ can obtain, an interesting question is to investigate when does this sum obtain these values. Thus, in the following we will determine the values of the integer $a$, for fixed $b$, for which $S(1,a,b)=0,\:\pm\: b/2$, respectively.
\section{The distribution of the values of $S(1,a,b)$}
\vspace{5mm}
\noindent \textbf{The set of integer values $a$ for which $S(1,a,b)=0$.}\\ \\
\noindent By Proposition $\ref{x:vasiko}$, for $S(1,a,b)=0$ we obtain
\[
(3\nu+2)b=(3a+k+1)+3E(1,k)\:.\tag{9}
\]
As we have illustrated in the previous sections, $\nu=0$ or $1$ and $E(1,k)=0$ or $b$. Thus, we can distinguish the following cases.\\
\textit{Case 1} If $\nu=0$, by (9) we get
$$2b=(3a+k+1)+3E(1,k)\:.$$
Hence, if $E(1,k)=0$ then $2b=3a+k+1$. On the other hand, if $E(1,k)=b$ then $3a+k+1=-b<0$, which is a contradiction.\\
\noindent \textit{Case 2.} If $\nu=1$, by (9) we obtain
$$5b=(3a+k+1)+3E(1,k)\:.$$
Thus, if $E(1,k)=0$ then $5b=3a+k+1$. But, since $1\leq a\leq b-1$ and $0\leq k \leq b-2$, it follows that $4\leq 3a+k+1\leq 4b-4$, which is a contradiction. If $E(1,k)=b$ then $2b=3a+k+1$.\\
Therefore, we obtain the following proposition
\begin{proposition}$\label{x:0}$
Let $a$, $b\in\mathbb{N}$, $b\geq 2$, where $(a,b)=1$ and $b\neq 3$. Then
$$S(1,a,b)=0$$ 
if and only if $2b=3a+k+1$, for some $k$, $0\leq k \leq b-2$.
\end{proposition}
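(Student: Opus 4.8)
The plan is to derive the proposition as a direct consequence of Proposition \ref{x:vasiko}, specializing the master identity $(3\nu+2)b = (3a+k+1) + 3E(1,k) + 2S(1,a,b)$ to the case $S(1,a,b)=0$ and then running through the finitely many admissible values of $\nu$ and $E(1,k)$. First I would recall that, by periodicity of $S(1,a,b)$ in $a$ with period $b$, it suffices to treat $1 \le a \le b-1$; and that $E(1,k)$, being a sum of complete geometric sums over residues mod $b$, takes only the values $0$ or $b$ (more precisely, it equals $b$ times the number of indices $\lambda$ in $(na, na+k]$ divisible by $b$, which under the standing constraint $3a+k+1 \equiv 0 \pmod b$ and $0 \le k \le b-2$ forces $E(1,k) \in \{0,b\}$). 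Likewise the bound $1 \le a \le b-1$, $0 \le k \le b-2$ gives $1/b \le (a+k)/b \le 2 - 3/b$, hence $\nu = \lfloor (a+k)/b \rfloor \in \{0,1\}$.

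Next I would split into the four combinations. With $S(1,a,b)=0$ the identity reads $(3\nu+2)b = (3a+k+1) + 3E(1,k)$. For $\nu=0$: if $E(1,k)=0$ we get exactly $2b = 3a+k+1$, which is the claimed relation; if $E(1,k)=b$ we get $3a+k+1 = 2b - 3b = -b < 0$, impossible since $3a+k+1 \ge 4$. For $\nu=1$: if $E(1,k)=0$ we get $5b = 3a+k+1$, but $3a+k+1 \le 3(b-1)+(b-2)+1 = 4b-4 < 5b$, impossible; if $E(1,k)=b$ we get $3a+k+1 = 5b - 3b = 2b$, again the claimed relation. So in every feasible case $S(1,a,b)=0$ forces $2b = 3a+k+1$ for some $k$ with $0 \le k \le b-2$, and conversely, if $2b = 3a+k+1$ holds for such a $k$ then plugging back into the identity of Proposition \ref{x:vasiko} with the corresponding value of $\nu$ and $E(1,k)$ (which is consistent precisely because $2b$ is attained in exactly the two cases above) yields $S(1,a,b)=0$.

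For the converse direction I should be a little careful: given $2b = 3a+k+1$ with $0 \le k \le b-2$, I need to check that the pair $(\nu, E(1,k))$ occurring in Proposition \ref{x:vasiko} is one of $(0,0)$ or $(1,b)$, so that $2S(1,a,b) = (3\nu+2)b - 2b - 3E(1,k) = 0$ in both cases. Since $2b = 3a+k+1$ and $a,k \ge 0$, we have $a = (2b-k-1)/3 \le (2b-1)/3$, so $a+k = (2b-k-1)/3 + k = (2b+2k-1)/3$; with $0 \le k \le b-2$ this lies in $[(2b-1)/3, (4b-5)/3]$, i.e. $(a+k)/b \in [\,(2-1/b)/3,\, (4-5/b)/3\,]$ — this interval is contained in $[0,2)$ but straddles $1$, so both $\nu=0$ and $\nu=1$ can genuinely occur, and one verifies $E(1,k)=0$ exactly when $\nu=0$ and $E(1,k)=b$ exactly when $\nu=1$ by tracking whether $\lfloor (a+k)/b \rfloor$ picks up the multiple of $b$ that $E(1,k)$ counts. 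The main obstacle, then, is this bookkeeping linking $\nu$ and $E(1,k)$ on the converse side; the forward direction is a pure case-check and essentially mechanical once the ranges of $\nu$ and $E(1,k)$ are pinned down. I would present the forward implication exactly as in the four-case analysis above and then dispatch the converse by the interval computation for $(a+k)/b$.
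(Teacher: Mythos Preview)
Your argument is correct and follows the paper's approach exactly: specialize the identity of Proposition~\ref{x:vasiko} to $S(1,a,b)=0$ and run through the four cases $\nu\in\{0,1\}$, $E(1,k)\in\{0,b\}$, eliminating the two infeasible ones by the bound $4\le 3a+k+1\le 4b-4$. The only difference is that you handle the converse directly via the observation that, for $1\le a\le b-1$ and $0\le k\le b-2$, the interval $(a,a+k]$ contains a multiple of $b$ precisely when $a+k\ge b$, so that in fact $E(1,k)=b\nu$ always; the paper instead leaves the converse implicit, relying on the trichotomy of Proposition~\ref{x:times} together with the fact that the three conditions $b=3a+k+1$, $2b=3a+k+1$, $3b=3a+k+1$ partition the admissible pairs $(a,k)$.
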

\noindent By the above proposition it follows that the only values of $a$ which can be zeros of $S(1,a,b)$ are the ones for which $(a,b)=1$ and
$$\left\lceil \frac{b+1}{3}\right\rceil \leq a \leq \left\lfloor \frac{2b-1}{3}\right\rfloor\:.$$
Hence, we obtain the following corollary.
\begin{cor}Let $a$, $b\in\mathbb{N}$, $b\geq 2$, where $(a,b)=1$ and $b\neq 3$. Then, the number of integers $a$, such that $1\leq a \leq b-1$ and $S(1,a,b)=0$, is given by the following formula
$$\#\left\{a\:|\:S(1,a,b)=0\right\}=\phi\left(b,\left\lceil \frac{b+1}{3}\right\rceil,\left\lfloor \frac{2b-1}{3}\right\rfloor\right),$$
where
$$\phi(n,A,B)=\sum_{\substack{A\leq k \leq B \\ (n,k)=1}}1\:.$$
\end{cor}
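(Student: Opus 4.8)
The plan is to reduce the corollary directly to Proposition~\ref{x:0} and then translate the congruence condition appearing there into a plain inequality constraint on $a$, after which the count is immediate from the definition of $\phi$.

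First I would invoke Proposition~\ref{x:0}: under the standing hypotheses $(a,b)=1$ and $b\neq 3$, one has $S(1,a,b)=0$ if and only if there exists an integer $k$ with $0\leq k\leq b-2$ and $2b=3a+k+1$. The key observation is that for a fixed $a$ this $k$ is \emph{uniquely} determined, namely $k=2b-3a-1$, so that the existence of an admissible $k$ is equivalent to the pair of inequalities $0\leq 2b-3a-1$ and $2b-3a-1\leq b-2$. The first gives $3a\leq 2b-1$ and the second gives $3a\geq b+1$, hence
$$\frac{b+1}{3}\leq a\leq\frac{2b-1}{3},$$
which for integer $a$ is the same as $\left\lceil\frac{b+1}{3}\right\rceil\leq a\leq\left\lfloor\frac{2b-1}{3}\right\rfloor$. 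Because $k$ is recovered uniquely from $a$, counting admissible pairs $(a,k)$ is the same as counting admissible $a$.

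Next I would verify that this interval is automatically contained in $\{1,\dots,b-1\}$, so the hypothesis $1\leq a\leq b-1$ imposes nothing extra: for $b\geq 2$ we have $\lceil(b+1)/3\rceil\geq 1$, while $(2b-1)/3<b$ gives $\lfloor(2b-1)/3\rfloor\leq b-1$. Consequently the set $\{a:1\leq a\leq b-1,\ (a,b)=1,\ S(1,a,b)=0\}$ is exactly the set of integers $a$ with $(a,b)=1$ and $\lceil(b+1)/3\rceil\leq a\leq\lfloor(2b-1)/3\rfloor$. By the definition $\phi(n,A,B)=\sum_{A\leq k\leq B,\ (n,k)=1}1$ applied with $n=b$, $A=\lceil(b+1)/3\rceil$ and $B=\lfloor(2b-1)/3\rfloor$, this cardinality equals $\phi\bigl(b,\lceil(b+1)/3\rceil,\lfloor(2b-1)/3\rfloor\bigr)$, which is the asserted formula.

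There is no serious obstacle here; the corollary is a direct bookkeeping consequence of Proposition~\ref{x:0}. The only points requiring a little care are (i) that $k=2b-3a-1$ is genuinely forced by $a$, so that no double counting occurs, and (ii) the endpoint arithmetic with the ceiling and floor functions — in particular checking that the interval $[\lceil(b+1)/3\rceil,\lfloor(2b-1)/3\rfloor]$ never spills outside $[1,b-1]$ and that, for each residue class $b\equiv 0,1,2\pmod 3$, no coprime $a$ in the range is spuriously excluded or included.
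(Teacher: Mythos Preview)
Your proof is correct and follows exactly the approach the paper takes: deduce from Proposition~\ref{x:0} that $S(1,a,b)=0$ forces $k=2b-3a-1$, translate $0\le k\le b-2$ into the inequalities $\lceil(b+1)/3\rceil\le a\le\lfloor(2b-1)/3\rfloor$, and read off the count from the definition of $\phi$. In fact you supply more detail than the paper, which simply states the resulting bounds on $a$ and declares the corollary.
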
 
\vspace{5mm}
\noindent \textbf{The set of integer values $a$ for which $S(1,a,b)=b/2$.}\\ \\
We shall now investigate the case when $S(1,a,b)=-b/2$, $1\leq a \leq b-1$, $(a,b)=1$. More specifically, by Proposition $\ref{x:vasiko}$ we obtain
$$(3\nu+2)b=(3a+k+1)+3E(1,k)+b\:.$$
\textit{Case 1.} If $\nu=0$ we have
$$b=(3a+k+1)+3E(1,k)\:.$$
Thus, if $E(1,k)=0$ then $b=3a+k+1$. If $E(1,k)=b$ then $3a+k+1=-2b<0$, which is a contradiction. \\
\textit{Case 2.} If $\nu=1$ we have
$$4b=(3a+k+1)+3E(1,k)\:.$$
Thus, if $E(1,k)=0$ then $4b=3a+k+1\leq 4b-4$ which is a contradiction. If $E(1,k)=b$ then $b=3a+k+1$.
Therefore, from the above we obtain the following proposition.
\begin{proposition}$\label{x:b/2}$
Let $a$, $b\in\mathbb{N}$, $b\geq 2$, where $(a,b)=1$ and $b\neq 3$. Then
$$S(1,a,b)=\frac{b}{2}$$ 
if and only if $b=3a+k+1$, for some $k$, $0\leq k \leq b-2$.
\end{proposition}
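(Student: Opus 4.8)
The plan is to read off the claim from Proposition~\ref{x:vasiko}, proceeding exactly as in the proof of Proposition~\ref{x:times}. By the periodicity of $S(1,a,b)$ in the variable $a$ (replacing $a$ by $a+b$ shifts each argument $2\pi m a/b$ by an integer multiple of $2\pi$) we may assume $1\le a\le b-1$; under this normalization the asserted condition ``$b=3a+k+1$ for some $0\le k\le b-2$'' is equivalent to $a\le (b-1)/3$. Since $(a,b)=1$ and $b\ne 3$ force $b\nmid 3a$, there is a unique $k$ with $0\le k\le b-2$ and $3a+k+1\equiv 0\pmod{b}$, and for this $k$ Proposition~\ref{x:vasiko} (in the form used in the proof of Proposition~\ref{x:times}) gives
\[
2S(1,a,b)=(3\nu+2)b-(3a+k+1)-3E(1,k),
\]
where $\nu=\lfloor (a+k)/b\rfloor\in\{0,1\}$ (because $1\le a+k\le 2b-3$) and $E(1,k)\in\{0,b\}$ (because $E(1,k)$ equals $b$ times the number of multiples of $b$ among the fewer than $b$ consecutive integers $a+1,\dots,a+k$).

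For the forward implication I would substitute $S(1,a,b)=b/2$ into the displayed identity and run through the four choices of $(\nu,E(1,k))\in\{0,1\}\times\{0,b\}$. The choice $(0,b)$ forces $3a+k+1=-2b<0$, and $(1,0)$ forces $3a+k+1=4b$, contradicting $3a+k+1\le 3(b-1)+(b-2)+1=4b-4$; both are impossible. The remaining choices $(0,0)$ and $(1,b)$ both yield $b=3a+k+1$. This is precisely the case analysis displayed just before the statement, so no new ingredient is needed here.

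For the converse I would take $b=3a+k+1$ with $0\le k\le b-2$ and pin down $\nu$ and $E(1,k)$ exactly. Because $b=3a+k+1\equiv 0\pmod{b}$, this $k$ is the unique one to which Proposition~\ref{x:vasiko} applies. From $b=3a+k+1$ and $k\ge 0$ we get $1\le a\le (b-1)/3$, hence $a+k=b-2a-1\in[0,b-3]$ and therefore $\nu=\lfloor (a+k)/b\rfloor=0$. Moreover the block $a+1,\dots,a+k$ (empty when $k=0$) lies in the open interval $(0,b)$, so it contains no multiple of $b$ and $E(1,k)=0$. Substituting $\nu=0$ and $E(1,k)=0$ into the identity gives $2S(1,a,b)=2b-b=b$, that is $S(1,a,b)=b/2$, as required.

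The four-way case check is routine arithmetic; the point requiring genuine care is the converse, where one must verify that $\nu$ and $E(1,k)$ are \emph{forced} to vanish rather than merely allowed to, which is why I would make the bounds $0\le b-2a-1\le b-3$ and $\{a+1,\dots,a+k\}\subset(0,b)$ explicit, and also confirm at the outset that the hypothesis ``$b=3a+k+1$'' picks out the same $k$ that occurs in Proposition~\ref{x:vasiko}.
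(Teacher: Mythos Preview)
Your forward implication is exactly the paper's argument: the same four-way split on $(\nu,E(1,k))$ with the same two contradictions and the same surviving conclusion $b=3a+k+1$.

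For the converse you do more than the paper. The paper's text establishes only the necessary direction for each of the three propositions (values $0$, $b/2$, $-b/2$); the sufficiency is left implicit, presumably via the trichotomy that $3a+k+1\equiv 0\pmod b$ together with $4\le 3a+k+1\le 4b-4$ forces $3a+k+1\in\{b,2b,3b\}$, and Proposition~\ref{x:times} pins $S(1,a,b)\in\{0,\pm b/2\}$, so the three necessary conditions must in fact partition the three values. You instead argue directly: from $b=3a+k+1$ you compute $a+k=b-2a-1\in[0,b-3]$, hence $\nu=0$, and $\{a+1,\dots,a+k\}\subset(0,b)$, hence $E(1,k)=0$, and then substitute back. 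This is self-contained and does not rely on the companion propositions, which is a small gain in clarity; the paper's implicit route is shorter once all three propositions are in hand.
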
 
\noindent By the above proposition it follows that the only values of $a$ for which \mbox{$S(1,a,b)=b/2$} are the ones for which $(a,b)=1$ and
$$1 \leq a \leq \left\lfloor \frac{b-1}{3}\right\rfloor\:.$$
Hence, we obtain the following corollary.
\begin{cor}Let $a$, $b\in\mathbb{N}$, $b\geq 2$, where $(a,b)=1$ and $b\neq 3$. Then, the number of integers $a$, such that $1\leq a \leq b-1$ and $S(1,a,b)=b/2$, is given by the following formula
$$\#\left\{a\:|\:S(1,a,b)=\frac{b}{2}\right\}=\phi\left(b,1,\left\lfloor \frac{b-1}{3}\right\rfloor\right).$$
\end{cor}
\vspace{5mm} 
\noindent \textbf{The set of integer values $a$ for which $S(1,a,b)=-b/2$.}\\ \\
We shall now investigate the final case when $S(1,a,b)=-b/2$, $1\leq a \leq b-1$, $(a,b)=1$. Again by Proposition $\ref{x:vasiko}$ we obtain
$$(3\nu+2)b=(3a+k+1)+3E(1,k)-b\:.$$
\textit{Case 1.} If $\nu=0$ we have
$$3b=(3a+k+1)+3E(1,k)\:.$$
Thus, if $E(1,k)=0$ then $3b=3a+k+1$. If $E(1,k)=b$ then $3a+k+1=0$, which is a contradiction. \\
\textit{Case 2.} If $\nu=1$ we have
$$6b=(3a+k+1)+3E(1,k)\:.$$
Thus, if $E(1,k)=0$ then $6b=3a+k+1\leq 4b-4$ from which we get $2b\leq -4$ which is a contradiction. If $E(1,k)=b$ then $3b=3a+k+1$.
Therefore, from the above we obtain the following proposition.
\begin{proposition}$\label{x:-b/2}$
Let $a$, $b\in\mathbb{N}$, $b\geq 2$, where $(a,b)=1$ and $b\neq 3$. Then
$$S(1,a,b)=-\frac{b}{2}$$ 
if and only if $3b=3a+k+1$, for some $k$, $0\leq k \leq b-2$.
\end{proposition}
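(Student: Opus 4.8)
The plan is to deduce the equivalence directly from Proposition~\ref{x:vasiko}, in exactly the same fashion as Propositions~\ref{x:0} and~\ref{x:b/2}; indeed the required case analysis is essentially the one carried out in the paragraph immediately preceding the statement, and the proof will simply organise it into the two implications. By periodicity of $a\mapsto S(1,a,b)$ we may assume $1\le a\le b-1$, and since $b\neq 3$ together with $(a,b)=1$ forces $b\nmid 3a$, Proposition~\ref{x:vasiko} supplies the unique $k\in\{0,1,\dots,b-2\}$ with $3a+k+1\equiv 0\pmod b$, together with the identity
$$(3\nu+2)b=(3a+k+1)+3E(1,k)+2S(1,a,b),\qquad \nu=\left\lfloor\frac{a+k}{b}\right\rfloor .$$
As already noted in Section~4, the bounds $1\le a\le b-1$ and $0\le k\le b-2$ give $1/b\le (a+k)/b<2$, hence $\nu\in\{0,1\}$; moreover each inner sum $\sum_{m=0}^{b-1}e^{2\pi im\lambda/b}$ equals $b$ or $0$ and at most one of the fewer-than-$b$ integers $a+1,\dots,a+k$ is divisible by $b$, so $E(1,k)\in\{0,b\}$. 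I will also use $|S(1,a,b)|<b$ from the beginning of Section~3 and the elementary range $4\le 3a+k+1\le 4b-4$.

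For the forward direction I would substitute $S(1,a,b)=-b/2$ into the identity, obtaining $(3\nu+3)b=(3a+k+1)+3E(1,k)$, and then run through the four pairs $(\nu,E(1,k))$. The pair $(0,b)$ gives $3a+k+1=0$ and the pair $(1,0)$ gives $6b=3a+k+1\le 4b-4$, both impossible; the surviving pairs $(0,0)$ and $(1,b)$ each reduce to $3b=3a+k+1$. Conversely, if $3b=3a+k+1$ for some $0\le k\le b-2$, then $3a+k+1=3b\equiv 0\pmod b$, so this $k$ is the one furnished by Proposition~\ref{x:vasiko}, and the identity rearranges to $2S(1,a,b)=(3\nu-1)b-3E(1,k)$. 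Going through the four pairs again: $(0,b)$ forces $S(1,a,b)=-2b$ and $(1,0)$ forces $S(1,a,b)=b$, both violating $|S(1,a,b)|<b$, so only $(0,0)$ or $(1,b)$ can occur, and each of these yields $S(1,a,b)=-b/2$.

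I do not expect a serious obstacle: the argument is parallel to the two preceding propositions. The one point needing a little care is the converse, where two of the four $(\nu,E(1,k))$ combinations are \emph{a priori} consistent with the equation $3b=3a+k+1$; rather than trying to pin down $\nu$ and $E(1,k)$ exactly from the inequalities $(2b+1)/3\le a\le b-1$ (which is possible but fiddly), it is cleaner to discard the two spurious combinations by appealing to the bound $|S(1,a,b)|<b$. One should also state explicitly the reduction to $1\le a\le b-1$, so that the claimed ranges of $\nu$ and of $E(1,k)$ are valid; everything else is routine arithmetic.
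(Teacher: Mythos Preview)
Your proposal is correct and follows the same case analysis on $(\nu,E(1,k))$ that the paper carries out in the paragraph preceding the proposition. The only difference is that the paper's text handles the forward direction explicitly and leaves the converse implicit (it follows from the trichotomy: by Proposition~\ref{x:times} the value of $S(1,a,b)$ is one of $0,\pm b/2$, and Propositions~\ref{x:0}, \ref{x:b/2}, \ref{x:-b/2} assign these to the mutually exclusive and exhaustive cases $3a+k+1=b,2b,3b$), whereas you prove the converse directly by discarding the two spurious $(\nu,E(1,k))$ pairs via the bound $|S(1,a,b)|<b$; both routes are valid and equally short.
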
 
\noindent By the above proposition it follows that the only values of $a$ for which $S(1,a,b)=-b/2$ are the ones for which $(a,b)=1$ and
$$\left\lceil \frac{2b+1}{3}\right\rceil \leq a \leq \left\lfloor \frac{3b-1}{3}\right\rfloor\:.$$
Hence, we obtain the following corollary.
\begin{cor}Let $a$, $b\in\mathbb{N}$, $b\geq 2$, where $(a,b)=1$ and $b\neq 3$. Then, the number of integers $a$, such that $1\leq a \leq b-1$ and $S(1,a,b)=-b/2$, is given by the following formula
$$\#\left\{a\:|\:S(1,a,b)=-\frac{b}{2}\right\}=\phi\left(b,\left\lceil \frac{2b+1}{3}\right\rceil,\left\lfloor \frac{3b-1}{3}\right\rfloor\right).$$
\end{cor}

%By propositions $\ref{x:0}$, $\ref{x:b/2}$ and $\ref{x:-b/2}$ we obtain the following corollary.
%\begin{cor}
%For every positive integer $n$, $n\geq 4$, it holds
%$$\phi\left(n,1,\left\lfloor \frac{n-1}{3}\right\rfloor\right)+\phi\left(n,\left\lceil \frac{n+1}{3}\right\rceil,\left\lfloor \frac{2n-1}{3}\right\rfloor\right)+\phi\left(n,\left\lceil \frac{2n+1}{3}\right\rceil,\left\lfloor \frac{3n-1}{3}\right\rfloor\right)=\phi(n)\:,$$
%where
%$$\phi(n,A,B)=\sum_{\substack{A\leq k \leq B \\ (n,k)=1}}1\:.$$
%\end{cor}
%
\vspace{10mm}
\section{The function $\phi(n,A,B)$}
\vspace{5mm}
\begin{lemma}$\label{x:lima}$
Let $A,B\in\mathbb{N}$ and
$$\phi(n,A,B):=\sum_{\substack{A\leq k \leq B \\ (n,k)=1}}1\:.$$%\footnote{In other words, the function $\phi(n,A,B)$ denotes the number of integers $k$, $A\leq k\leq B$, such that $(n,k)=1$.}$$
Then, we have 
$$\phi(n,A,B)=\sum_{d|n}\mu(d)\left(\left\lfloor \frac{B}{d}\right\rfloor-\left\lceil \frac{A}{d}\right\rceil\right)\:,$$
where $\mu(n)$ is the M\"{o}bius function. 
\end{lemma}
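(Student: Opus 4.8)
The plan is to derive the formula from the standard Möbius–inclusion–exclusion identity for the indicator of coprimality, namely $\sum_{d \mid (n,k)} \mu(d) = 1$ if $(n,k)=1$ and $0$ otherwise. First I would write
$$\phi(n,A,B) = \sum_{A \leq k \leq B} \sum_{d \mid (n,k)} \mu(d),$$
using that the inner sum is exactly the indicator function of the event $(n,k)=1$. Since $d \mid (n,k)$ is equivalent to $d \mid n$ and $d \mid k$ simultaneously, I would interchange the order of summation to obtain
$$\phi(n,A,B) = \sum_{d \mid n} \mu(d) \sum_{\substack{A \leq k \leq B \\ d \mid k}} 1.$$

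The remaining step is to evaluate the inner count: the number of multiples of $d$ in the interval $[A,B]$. This count equals $\lfloor B/d \rfloor - \lceil A/d \rceil + 1$ when we include both endpoints' contributions correctly, but to match the stated formula $\lfloor B/d\rfloor - \lceil A/d\rceil$ exactly I would need to check the boundary convention being used — presumably the interval is taken as $A \leq k \leq B$ with the count of multiples of $d$ being $\lfloor B/d \rfloor - \lceil (A-1)/d \rceil$ or the author intends $\lfloor B/d\rfloor - \lfloor (A-1)/d \rfloor$, and since $\lceil A/d \rceil = \lfloor (A-1)/d \rfloor + 1$ is not generally true, I should reconcile this carefully. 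The cleanest route: the number of $k$ with $A \leq k \leq B$ and $d \mid k$ is $\lfloor B/d \rfloor - \lceil A/d \rceil + 1$, so plugging in gives $\phi(n,A,B) = \sum_{d\mid n}\mu(d)\big(\lfloor B/d\rfloor - \lceil A/d\rceil + 1\big)$; then since $\sum_{d\mid n}\mu(d) = 0$ for $n>1$, the extra $+1$ contributes nothing, yielding the stated identity (and for $n=1$ the single term $d=1$ gives $B-A+1$, so strictly the formula as stated would need the $+1$, suggesting the hypothesis $n>1$ or a half-open interval is implicitly in force).

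The main obstacle, such as it is, will be pinning down the exact off-by-one convention so the final formula reads precisely as displayed; the substantive mathematics (Möbius inversion plus swapping summation order) is entirely routine. I would resolve this by invoking $\sum_{d\mid n}\mu(d) = 0$ to absorb the constant term, which makes the $+1$ versus no $+1$ discrepancy immaterial whenever $n > 1$, and this is presumably the intended reading given that $\phi(n,A,B)$ is only interesting for composite or prime $n > 1$ in the applications above. A brief remark noting this would complete the argument.
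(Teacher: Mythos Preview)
Your approach is essentially identical to the paper's: write $\phi(n,A,B)=\sum_{A\le k\le B}\sum_{d\mid(n,k)}\mu(d)$, swap the order of summation, and count multiples of $d$ in $[A,B]$. Your handling of the off-by-one issue is in fact more careful than the paper's own argument, which simply writes $\sum_{m=\lceil A/d\rceil}^{\lfloor B/d\rfloor}1=\lfloor B/d\rfloor-\lceil A/d\rceil$ without the $+1$; your fix via $\sum_{d\mid n}\mu(d)=0$ for $n>1$ is exactly the right way to reconcile the stated formula, and the hypothesis $n>1$ is indeed present wherever the lemma is applied.
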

\begin{proof}
We know that 
$$\sum_{d|N}\mu(d)=\left\lfloor \frac{1}{N}\right\rfloor\:.$$
Thus, we get
$$\phi(n,A,B)=\sum_{k=A}^{B}\:\sum_{d|(n,k)}\mu(d)=\sum_{k=A}^{B}\sum_{\substack{d|n \\ d|k}}\mu(d)\:.$$
Hence, $k=md$, for some $m\in\mathbb{N}$. But, since $A\leq k\leq B$ it follows that
$$\frac{A}{d}\leq m\leq \frac{B}{d}\:.$$
Therefore, we obtain
\begin{eqnarray}
\phi(n,A,B)&=&\sum_{d|n}\:\sum_{m=\left\lceil A/d\right\rceil}^{\left\lfloor B/d\right\rfloor}\mu(d)\nonumber\\
&=&\sum_{d|n}\mu(d)\sum_{m=\left\lceil A/d\right\rceil}^{\left\lfloor B/d\right\rfloor}1\nonumber\\
&=&\sum_{d|n}\mu(d)\left(\left\lfloor \frac{B}{d}\right\rfloor-\left\lceil \frac{A}{d}\right\rceil\right)\:.\nonumber
\end{eqnarray}

\end{proof}
\begin{proposition}$\label{x:kedriko}$
Let $n$, $A$, $B\in\mathbb{N}$, $n>1$. Then, we have
$$\phi(n,A,B)=\frac{B-A}{n}\phi(n)+\delta_{n,A}+O\left(\sum_{d|n}\mu(d)^2\right)\:,$$
where $\delta_{n,A}=1$ if $(n,A)=1$ and $0$ otherwise.
\end{proposition}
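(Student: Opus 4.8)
The plan is to start from the exact formula of Lemma~\ref{x:lima},
$$\phi(n,A,B)=\sum_{d|n}\mu(d)\left(\left\lfloor \frac{B}{d}\right\rfloor-\left\lceil \frac{A}{d}\right\rceil\right),$$
and to replace each floor and ceiling by its main part plus a bounded fractional correction. Writing $\lfloor B/d\rfloor=B/d-\{B/d\}$ and $\lceil A/d\rceil=A/d+\{-A/d\}$, where $\{-A/d\}=0$ if $d\mid A$ and $\{-A/d\}=1-\{A/d\}$ otherwise, each summand equals $(B-A)/d$ minus two quantities lying in $[0,1)$. Summing against $\mu(d)$ then decomposes $\phi(n,A,B)$ into a main term $(B-A)\sum_{d|n}\mu(d)/d$ together with two remainder sums, one coming from $B$ and one from $A$.

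For the main term I would invoke the classical identity $\sum_{d|n}\mu(d)/d=\phi(n)/n$, which produces exactly $\frac{B-A}{n}\phi(n)$. The remainder coming from $B$ is $-\sum_{d|n}\mu(d)\{B/d\}$; since $|\mu(d)|=\mu(d)^2$ and $0\le\{B/d\}<1$, its modulus is at most $\sum_{d|n}\mu(d)^2$, so it is absorbed into the error term.

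The only delicate step — and the one I expect to be the genuine obstacle — is extracting $\delta_{n,A}$ from the remainder coming from $A$. Here I would write
$$-\sum_{d|n}\mu(d)\left\{-\frac{A}{d}\right\}=-\sum_{\substack{d|n\\ d\nmid A}}\mu(d)\left(1-\left\{\frac{A}{d}\right\}\right),$$
and observe that the part carrying $\{A/d\}$ is again $O\bigl(\sum_{d|n}\mu(d)^2\bigr)$ by the same trivial estimate, whereas the remaining piece is
$$-\sum_{\substack{d|n\\ d\nmid A}}\mu(d)=\sum_{d|(n,A)}\mu(d)-\sum_{d|n}\mu(d).$$
Since $n>1$ we have $\sum_{d|n}\mu(d)=0$, while $\sum_{d|(n,A)}\mu(d)$ equals $1$ when $(n,A)=1$ and $0$ otherwise, i.e.\ precisely $\delta_{n,A}$. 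Assembling the three contributions yields the asserted identity, the implied constant being absolute; in fact the total error is at most $2\sum_{d|n}\mu(d)^2$ in modulus.
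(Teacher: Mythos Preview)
Your proof is correct and follows essentially the same approach as the paper: both start from Lemma~\ref{x:lima}, extract the main term via $\sum_{d|n}\mu(d)/d=\phi(n)/n$, obtain $\delta_{n,A}$ from the M\"obius identities $\sum_{d|n}\mu(d)=0$ (for $n>1$) and $\sum_{d|(n,A)}\mu(d)=\delta_{n,A}$, and bound the fractional residues trivially by $\sum_{d|n}\mu(d)^2$. The only cosmetic difference is that the paper first rewrites $\phi(n,A,B)=\phi(n,1,B)-\phi(n,1,A)+\delta_{n,A}$ in terms of the Legendre totient $\phi(n,1,x)$ and then approximates that, whereas you handle the fractional parts directly without naming this intermediate object.
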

\begin{proof}
By Lemma $\ref{x:lima}$ we get
\begin{eqnarray}
\phi(n,A,B)&=&\sum_{d|n}\mu(d)\left\lfloor \frac{B}{d}\right\rfloor-\sum_{d|n}\mu(d)\left\lceil \frac{A}{d}\right\rceil.\nonumber
\end{eqnarray}
However, since 
\begin{equation}
\left\lceil \frac{A}{d}\right\rceil=\left\{
\begin{array}{l l}
    \left\lfloor \frac{A}{d}\right\rfloor+1\:, & \quad \text{if $d\not| A$}\vspace{2mm}\\ 
    \left\lfloor \frac{A}{d}\right\rfloor \:, & \quad \text{otherwise}\:,\\
  \end{array} \right.
  \nonumber
\end{equation}
it follows that
\begin{eqnarray}
\phi(n,A,B)&=&\sum_{d|n}\mu(d)\left\lfloor \frac{B}{d}\right\rfloor-\sum_{d|n}\mu(d)\left( \left\lfloor \frac{A}{d}\right\rfloor+1\right)+\sum_{\substack{d|n \\ d|A}}\mu(d)\nonumber\\
&=&\phi(n,1,B)-\phi(n,1,A)-\sum_{d|n}\mu(d)+\sum_{\substack{d|n \\ d|A}}\mu(d).\nonumber
\end{eqnarray}
However, it is a well known fact that for $n>1$ it holds $\sum_{d|n}\mu(d)=0$. Additionally, one can easily show that 
\begin{equation}
\sum_{\substack{d|n \\ d|A}}\mu(d)=\left\{
\begin{array}{l l}
    1\:, & \quad \text{if $(n,A)=1$}\vspace{2mm}\\ 
    0\:, & \quad \text{otherwise}\:.\\
  \end{array} \right.
  \nonumber
\end{equation}
Therefore, we obtain
\[
\phi(n,A,B)=\phi(n,1,B)-\phi(n,1,A)+\delta_{n,A}\:.\tag{10}
\]
The function $\phi(n,1,x)$, $x\in\mathbb{R}^+$ is exactly the so-called Legendre totient function . Generally, we have
\begin{eqnarray}
\phi(n,1,x)&=&\sum_{d|n}\mu(d)\left\lfloor \frac{x}{d}\right\rfloor\nonumber=\sum_{d|n}\mu(d)\left(\frac{x}{d}+O(1)\right)\nonumber\\
&=&x\sum_{d|n}\frac{\mu(d)}{d}+O\left(\sum_{d|n}\mu(d)^2\right)\nonumber\\
&=&x\frac{\phi(n)}{n}+O\left(\sum_{d|n}\mu(d)^2\right).\nonumber
\end{eqnarray}
Hence, by (10) we obtain the desired result.

\end{proof}

\noindent The above proposition presents an approximation formula for the generalized totient function $\phi(n,A,B)$ up to the error
$$\sum_{d|n}\mu(d)^2=2^{\omega(n)}.$$
However, it is a known fact that for every positive integer $n$ and every $\epsilon>0$, we have
$$2^{\omega(n)}\leq d(n)\ll_{\epsilon}n^{\epsilon},$$
where $d(n)$ denotes the number of positive divisors of $n$ (for relevant properties of $d(n)$ cf. \cite{apostol}).\\
This demonstrates that the error term in Proposition $\ref{x:kedriko}$ is relatively small. 
%This is especially interesting when $n$ has many prime factors, because the result obtained is unexpected.\\

Based just on the definition of the function $\phi(n,A,B)$ we can also prove the following two propositions.
\begin{proposition}
For every $n$, $A$, $B\in\mathbb{N}$, we have
$$\sum_{d|n}\phi\left(d,\frac{A}{d},\frac{B}{d}\right)=B-A+1\:.$$
\end{proposition}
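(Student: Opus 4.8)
The plan is to interpret the identity combinatorially as a partition of a counting problem according to greatest common divisors. First I would consider the set of integers $\{A, A+1, \ldots, B\}$, whose cardinality is exactly $B - A + 1$, the right-hand side. Every such integer $m$ has a well-defined value $g := (m, n)$, which is a divisor of $n$. Thus I would partition the set $\{A \le m \le B\}$ into classes indexed by the divisors $g \mid n$, where the class attached to $g$ consists of those $m$ with $(m,n) = g$. Summing the sizes of these classes recovers $B - A + 1$, so the whole task reduces to showing that the class attached to $g$ has size $\phi\!\left(g', A/g', B/g'\right)$ for the appropriate reindexing — and here one must be slightly careful, since the statement as written has $\phi(d, A/d, B/d)$ with $A/d$, $B/d$ not necessarily integers, so I would first interpret $\phi$ with rational bounds via the ceiling/floor convention implicit in its definition (i.e. $\phi(n,A,B) = \#\{A \le k \le B : (n,k)=1\}$ makes sense for real $A, B$).

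The key bijection is the following: for a fixed divisor $d \mid n$, write $m = d\ell$; then $(m, n) = d$ if and only if $d \mid m$ and $(\ell, n/d) = 1$. As $m$ ranges over multiples of $d$ in $[A, B]$, the quotient $\ell$ ranges over integers in $[A/d, B/d]$. Hence the number of $m \in [A,B]$ with $(m,n) = d$ equals $\#\{\, \ell : A/d \le \ell \le B/d,\ (\ell, n/d) = 1 \,\} = \phi\!\left(n/d, A/d, B/d\right)$. Now I would sum over all $d \mid n$: since the conditions $(m,n) = d$ for distinct $d$ are mutually exclusive and exhaustive over $m \in [A,B]$, we get
$$\sum_{d \mid n} \phi\!\left(\frac{n}{d}, \frac{A}{d}, \frac{B}{d}\right) = \#\{\, A \le m \le B \,\} = B - A + 1.$$
Finally, replacing the summation index $d$ by $n/d$ (which is a bijection of the set of divisors of $n$ onto itself) turns the left-hand side into $\sum_{d \mid n} \phi\!\left(d, \frac{A d}{n}, \frac{B d}{n}\right)$; one checks this matches the stated identity $\sum_{d \mid n} \phi(d, A/d, B/d) = B - A + 1$ — more precisely, the cleanest route is to observe the statement is really about the multiset of gcd-values, so I would present it directly in the form $\sum_{d\mid n}\phi(n/d, A/d, B/d)$ and note that this equals $\sum_{d\mid n}\phi(d, A/d, B/d)$ after relabeling, provided one reads the bounds consistently. (In the notation of the paper's statement, $d$ plays the role of my $n/d$.)

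The main obstacle I anticipate is purely notational rather than mathematical: making the reindexing $d \leftrightarrow n/d$ line up exactly with the bounds $A/d$ and $B/d$ as printed, and confirming that the real-argument version of $\phi$ (with non-integer $A/d$, $B/d$) is the one intended — once the convention is pinned down, the counting argument is immediate. Alternatively, one could avoid reindexing altogether by proving it via Lemma~\ref{x:lima}: expand each $\phi(d, A/d, B/d) = \sum_{e \mid d} \mu(e)\bigl(\lfloor B/(de) \rfloor - \lceil A/(de) \rceil\bigr)$, swap the order of summation over pairs $(d, e)$ with $e \mid d \mid n$, set $f = de$, and use $\sum_{f \mid n} \bigl(\lfloor B/f \rfloor - \lceil A/f \rceil\bigr)\sum_{e \mid f}\mu(e) = \lfloor B \rfloor - \lceil A \rceil = B - A$ when $A, B$ are integers (adjusting by the $+1$ from the $f = 1$ term); but the combinatorial bijection above is cleaner and I would present that as the main argument, relegating the Möbius computation to a remark.
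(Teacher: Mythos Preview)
Your approach is exactly the paper's: partition $\{A,\ldots,B\}$ according to the value $g=(m,n)$, identify the class with $(m,n)=d$ as having size $\phi(n/d,A/d,B/d)$, and sum over $d\mid n$. The paper carries out this argument almost verbatim, defining the sets $R(n,d;A,B)=\{m:(m,n)=d,\ A\le m\le B\}$ and observing they partition $\{A,\ldots,B\}$.

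Your unease about the final reindexing step is well founded, and the hedge ``provided one reads the bounds consistently'' does not resolve it. You correctly compute that $d\mapsto n/d$ sends $\sum_{d\mid n}\phi(n/d,A/d,B/d)$ to $\sum_{d\mid n}\phi(d,Ad/n,Bd/n)$, but this is \emph{not} the printed sum $\sum_{d\mid n}\phi(d,A/d,B/d)$: the bounds change along with the first argument, so the substitution does not line up. In fact the statement as printed is false. For $n=6$, $A=1$, $B=6$ the term $d=1$ alone gives $\phi(1,1,6)=6$, and the remaining terms $d=2,3,6$ contribute $2+2+1$, for a total of $11\neq 6$. The paper's own proof makes the identical slip in its last displayed line, asserting $\sum_{d\mid n}\phi(n/d,A/d,B/d)=\sum_{d\mid n}\phi(d,A/d,B/d)$ without justification. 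What both you and the paper actually establish is the correct identity
\[
\sum_{d\mid n}\phi\!\left(\frac{n}{d},\frac{A}{d},\frac{B}{d}\right)=B-A+1,
\]
and the printed proposition should be read (or corrected) accordingly.
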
 
\begin{proof}
We consider the sets
$$N(A,B):=\{A,A+1,\ldots,B-1,B  \}$$
and
$$R(n,d\:;A,B):=\{ m\: : \: (m,n)=d, A\leq m\leq B \}\:.$$
It is evident that each set $R(n,d\:;A,B)$ is a subset of $N(A,B)$, containing those elements which have greatest common divisor $d$ with $n$. Since the sets $R(n,d\:;A,B)$ are mutually disjoint for different values of $d$, it follows that
\[
\sum_{d|n}|R(n,d\:;A,B)|=B-A+1.\tag{11}
\]
However, since $(m,n)=d$ is equivalent to $(m/d,n/d)=1$ and the inequality $A\leq m \leq B$ is equivalent to $A/d\leq m/d \leq B/d$, by setting $r:=m/d$ it follows that
$$|R(n,d\:;A,B)|=\:\vline\left\{r\: : \: \left( r,\frac{n}{d} \right)=1, \frac{A}{d}\leq r \leq \frac{B}{d}  \right\}\vline=\phi\left(\frac{n}{d},\frac{A}{d},\frac{B}{d} \right)\:.$$
Therefore, we obtain
$$\sum_{d|n}|R(n,d\:;A,B)|=\sum_{d|n}\phi\left(\frac{n}{d},\frac{A}{d},\frac{B}{d} \right)=\sum_{d|n}\phi\left(d,\frac{A}{d},\frac{B}{d} \right)$$
and hence, by (11) the desired result follows.

\end{proof}

\begin{proposition}
For every $n$, $A$, $B\in\mathbb{N}$, we have
$$\sum_{\substack{A\leq k\leq B \\ (k,n)=1}}k=\frac{n}{2}\:\phi(n,A,B).$$
\end{proposition}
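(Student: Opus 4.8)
The plan is to run the classical pairing argument behind the identity $\sum_{1\le k\le n,\,(k,n)=1}k=\frac n2\phi(n)$, using the involution $\sigma(k)=n-k$. Since $(k,n)=(n-k,n)$, the map $\sigma$ sends an integer coprime to $n$ to another integer coprime to $n$; and whenever the interval $[A,B]$ is symmetric about $n/2$, i.e. $A+B=n$ --- which is exactly the shape of the interval $\big[\lceil(b+1)/3\rceil,\lfloor(2b-1)/3\rfloor\big]$ appearing in the corollary for $S(1,a,b)=0$ --- the condition $A\le n-k\le B$ is equivalent to $A\le k\le B$. So the first step is to record that, under such a hypothesis, $\sigma$ restricts to an involution of the index set $I:=\{k:\ A\le k\le B,\ (k,n)=1\}$ of the sum.

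Next I would decompose $I$ into the orbits of $\sigma$. An orbit is a pair $\{k,n-k\}$ unless $k=n-k$, i.e. $k=n/2$; but then $(k,n)=n/2>1$ as soon as $n\ge 3$, so such a $k$ never lies in $I$. Hence for $n\ge 3$ the set $I$ is a disjoint union of exactly $\tfrac12|I|=\tfrac12\phi(n,A,B)$ two-element orbits, and each orbit contributes $k+(n-k)=n$ to the sum. Summing over orbits yields $\sum_{k\in I}k=n\cdot\tfrac12\phi(n,A,B)=\tfrac n2\phi(n,A,B)$. The cases $n=1,2$ I would dispose of by hand; for $n=2$ the only relevant residue is $k=1=n/2$, and a symmetric interval makes the identity read $1=\tfrac n2\cdot1$.

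An alternative, purely computational route would be to open the coprimality condition by Möbius inversion as in Lemma~\ref{x:lima}: write $\sum_{A\le k\le B,\,(k,n)=1}k=\sum_{d\mid n}\mu(d)\sum_{A\le k\le B,\ d\mid k}k$, evaluate the inner arithmetic-progression sum in closed form, and compare with the expression for $\phi(n,A,B)$ from Lemma~\ref{x:lima}; after simplification the factor $\tfrac n2$ emerges from $\sum_{d\mid n}\mu(d)=0$ together with $\sum_{d\mid n}\mu(d)/d=\phi(n)/n$. The step I expect to be the genuine obstacle, in either approach, is the bookkeeping around $[A,B]$: the pairing is an involution of $I$ only when $[A,B]$ is centred at $n/2$, and the Möbius computation collapses to the clean constant $\tfrac n2$ only under the corresponding constraint on $A$ and $B$, so the crux is to pin down precisely this hypothesis and to check that the interval to which the proposition is applied satisfies it.
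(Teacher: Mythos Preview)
Your pairing argument via $k\mapsto n-k$ is exactly the paper's proof: the paper simply writes
\[
\sum_i k_i=\sum_i (n-k_i)=n\,\phi(n,A,B)-\sum_i k_i
\]
and solves for the sum. You have, however, put your finger on something the paper glosses over: the first equality needs $\{k_i\}=\{n-k_i\}$ as sets, i.e.\ that $k\mapsto n-k$ carries the index set $I=\{A\le k\le B:(k,n)=1\}$ to itself, and this holds precisely when $A+B=n$. Without that hypothesis the proposition is false as stated --- e.g.\ $n=4$, $A=1$, $B=2$ gives left side $1$ and right side $2$ --- so your instinct to isolate the symmetry condition is correct, and the paper's argument tacitly assumes it. Your orbit decomposition and the handling of the possible fixed point $k=n/2$ make the argument rigorous once $A+B=n$ is imposed.
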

\begin{proof}
Let $k_1,k_2,\ldots,k_{\phi(n,A,B)}$ be the integers such that $A\leq k_i\leq B$, $(k_i,n)=1$. Since $(k_i,n)=1$ is equivalent to $(n-k_i,n)=1$, it is evident that
\begin{eqnarray}
k_1+k_2+\cdots+k_{\phi(n,A,B)}&=&(n-k_1)+(n-k_2)+\cdots+(n-k_{\phi(n,A,B)})\nonumber\\
&=&n\phi(n,A,B)-(k_1+k_2+\cdots+k_{\phi(n,A,B)}),\nonumber
\end{eqnarray}
from which the desired result follows.
\end{proof}
%By the above proposition and Proposition $\ref{x:kedriko}$, we obtain the following corollary.
%\begin{cor}
%Let $n$, $A$, $B\in\mathbb{N}$, $n>1$. Then, for every $\epsilon>0$ we have
%$$\sum_{\substack{A\leq k\leq B \\ (k,n)=1}}k=\frac{B-A}{n}\phi(n)+O\left(n^{\epsilon+1}\right)\:.$$
%\end{cor}

\vspace{5mm}
\noindent\textbf{Acknowledgments.} The author would like to acknowledge financial support obtained through the Forschungskredit grant (Grant Nr. FK-15-106) of the University of Zurich. 


\begin{thebibliography}{99}%
\bibitem{apostol} T. M. Apostol, \textit{Introduction to Analytic Number Theory}, Springer--Verlag, New York, 1984.
%\bibitem{baker} R. C. Baker, \textit{The square-free divisor problem}, Quart. J. Oxford, 45(2)(1994), 269--277.
%\bibitem{hua} L. K. Hua, \textit{Introduction to Number Theory}, Springer-Verlag, Berlin, 1982.
%\bibitem{kow} H. Iwaniec and E. Kowalski, \textit{Analytic Number Theory}, A.M.S Colloq. Publ. 53,
%A.M.S, 2004.
%\bibitem{maier} H. Maier and M. Th. Rassias, \textit{Generalizations of a cotangent sum associated to the zeros of the Estermann zeta function}
%\bibitem{murty} M. R. Murty, \textit{Problems in Analytic Number Theory}, Springer$-$Verlag, New York, 2001.
\bibitem{Furdui} O. Furdui, \textit{Limits, Series, and Fractional Part Integrals - Problems in Mathematical Analysis}, Springer, New York, 2013.
\bibitem{Ras} M. Th. Rassias, \textit{A cotangent sum related to the zeros of the Estermann zeta function},  Applied Mathematics and Computation, 240(2014), 161-167.
%\bibitem{Turann} P. Tur\'an, \textit{On a theorem of Hardy and Ramanujan}, J. London Math. Soc, 9 (1934), 274--276.
\end{thebibliography}
\end{document}